\documentclass[11pt, oneside]{article}   	
\usepackage{geometry}                		
\geometry{letterpaper}                   		
\usepackage{graphicx}				
								
\usepackage[all]{xy}
\CompileMatrices

\usepackage{amssymb}
\usepackage{amsmath}
\usepackage{stmaryrd}
\usepackage{amsthm}
\usepackage{tikz-cd}
\usepackage{extarrows}
\usepackage[mathscr]{euscript}
\usepackage{mathrsfs} 
\usepackage{verbatim}
\usepackage[OT2,T1]{fontenc}

\usepackage[noinputenc,nocaptions]{vietnam}
\nonfrenchspacing
\catcode`\@=11
\def\tabb{\@tabacckludge}
\catcode`\@=12

\usepackage{rotating}

\usepackage{bm}

\DeclareSymbolFont{cyrletters}{OT2}{wncyr}{m}{n}
\DeclareMathSymbol{\Sha}{\mathalpha}{cyrletters}{"58}
\DeclareMathSymbol{\Che}{\mathalpha}{cyrletters}{"51}

\usepackage{calligra}
\usepackage{mathrsfs}

\newcommand{\Gm}{{\mathbf{G}}_{\rm{m}}}

\DeclareMathOperator{\Gal}{Gal}

\DeclareMathOperator{\Br}{Br}

\DeclareMathOperator{\Pic}{Pic}

\DeclareMathOperator{\R}{R}

\DeclareMathOperator{\Spec}{Spec}

\DeclareMathOperator{\ind}{ind}
\DeclareMathOperator{\per}{per}

\newcommand*{\Z}{\ensuremath{\mathbf{Z}}}                        
\newcommand*{\Q}{\ensuremath{\mathbf{Q}}}                     
\newcommand*{\C}{\ensuremath{\mathbf{C}}}                        
\newcommand*{\A}{\ensuremath{\mathbf{A}}}                        
\newcommand*{\calO}{\mathcal{O}}                                  
\newcommand*{\address}{Einstein Institute of Mathematics, The Hebrew University of Jerusalem, Edmond J. Safra Campus, 91904, Jerusalem, Israel}
\newcommand*{\email}{zevrosengarten@gmail.com}

\newtheorem{theorem}{Theorem}[section]

\newtheorem{lemma}[theorem]{Lemma}
\newtheorem{proposition}[theorem]{Proposition}
\newtheorem{corollary}[theorem]{Corollary}

\theoremstyle{definition}
  \newtheorem{definition}[theorem]{Definition}

\theoremstyle{remark}
  \newtheorem{remark}[theorem]{Remark}

\theoremstyle{definition}

\theoremstyle{remark}

\usepackage[OT2,T1]{fontenc}

\tikzset{commutative diagrams/.cd,
mysymbol/.style={start anchor=center,end anchor=center,draw=none}
}

\usepackage{stmaryrd}

\title{\textbf{SPLITTING $p$-PRIMARY COHOMOLOGY CLASSES OF TORI IN CHARACTERISTIC $p$}} 
\author{Zev Rosengarten \thanks{MSC 2020: 11E72, 11R58, 20G10, 20G30. \newline
Keywords: Galois Cohomology, Function Fields, Tori. \newline
While completing this work, the author was supported by Israel Science Foundation Grant No.\,2083/24.
}}

\begin{document}

\date{}
\maketitle

\begin{abstract}
We prove that $p$-primary cohomology classes of a torus $T$ over a global function field of characteristic $p$ may be split by suitable separable $p$-primary extensions. More precisely, we show that such cohomology classes will split in any ``large'' $p$-primary extension (and in fact, prove the same for $\ell$-primary classes over ``large'' $\ell$-primary extensions for every prime $\ell$, including $\ell \neq \mathrm{char}(K)$), and we prove that $p^n$-torsion classes may be split by a (solvable) separable $p$-primary extension of degree $\leq (p^n)^{1+cm\mathrm{log}(m)^3}$ for an explicitly computable universal constant $c > 0$, where $m$ is the degree of a finite Galois extension splitting the torus $T$. Along the way, we also prove Grunwald-Wang type results of independent interest which allow one to approximate a given finite list of abelian $p$-primary local extensions of places of a global function field by a suitable global extension.
\end{abstract}

\tableofcontents{}

\section{Introduction}

Given an algebraic variety $X$ over a field $K$, one of the most basic questions is whether $X$ admits a rational point over $K$. A related question is to determine whether $X$ admits a rational point over certain finite extensions $L/K$. An important quantity which to a certain extent measures the answer to this question is the so-called index $\ind(X/K)$ of $X$, defined to be the greatest common divisor of $[L: K]$ as $L$ varies over those finite extensions over which $X$ acquires points. When $K$ is imperfect, it is natural to also consider the separable index $\ind_{\mathrm{sep}}(X/K)$, defined to be the gcd of those finite {\em separable} $L/K$ such that $X(L) \neq \emptyset$, assuming such extensions exist. When $X$ is generically smooth and nonempty, then $X(K_s) \neq \emptyset$ and so $\ind_{\mathrm{sep}}(X/K) < \infty$. In fact, by a theorem of Gabber-Liu-Lorenzini, when $X$ is a regular and generically smooth $K$-scheme of finite type, one has $\ind(X/K) = \ind_{\mathrm{sep}}(X/K)$ \cite[Th.\,9.2]{gll}.

A case of particular interest is when $G$ is a $K$-group scheme and $X$ is an fppf $G$-torsor over $K$. Then $X$ acquires a point over $L$ precisely when $L$ splits the corresponding cohomology class in $\mathrm{H}^1(K, G)$. (When $G$ is smooth, this agrees with the Galois cohomology.) If $G$ is commutative, then the period $\per(X)$ (the order of $X$ as an element of the group $\mathrm{H}^1(K, G)$) divides the index $\ind(X)$, and it is natural to wonder how far this is from being an equality. When $G = T$ is a torus, a simple argument allows one to construct an extension of degree dividing $\per(X)^d$ ($d := \mathrm{dim}(T)$) splitting $X$ \cite[Prop.\,13.2]{borovoireichstein}, hence $\ind(X)\mid \per(X)^d$. However, when $p := \mathrm{char}(K) > 0$ and $p\mid \ind(X)$, the extension constructed in the proof is typically not separable, so one does not immediately obtain the existence of a separable $p$-primary extension splitting $X$. Nevertheless, thanks to the theorem of Gabber-Liu-Lorenzini, one still obtains that $\ind(X)\mid \per_{\mathrm{sep}}(X)^d$ so that, when $\per(X)$ is a power of $p$, so too is the gcd of the degrees of the separable extensions splitting $X$.

In the present paper, we are concerned with two subtle modifications of the period-index problem for $p$-primary cohomology classes of tori over global function fields of characteristic $p$:
\\

\noindent (1) Can one find a {\em separable} extension of $p$-power degree bounded solely in terms of data attached to the torus (in particular, of course we want it to be independent of the cohomology class) splitting the given class?
\\

\noindent (2) Can one find subextensions of a particular given (infinite) $p$-primary extension of the function field $K$ splitting the given class?
\\

While (2) has the additional feature of finding a splitting field in a given $p$-primary extension, both (1) and (2) differ from the index problem because we are seeking a separable $p$-primary extension splitting the class, whereas the separable index being $p$-primary merely tells us that no prime other than $p$ can divide the degrees of {\em all} of the separable extensions splitting the class.

Before stating our main results, we remark that, by simple arguments, one may show that, over a field $K$ of characteristic $p$, the issue of splitting, over a separable $p$-primary extension, a $p$-primary class in $\mathrm{H}^1(K, G)$ for an arbitrary affine commutative $K$-group scheme $G$ of finite type reduces easily to the torus case, so it is reasonable to concentrate on this setting. (The key point here is that, for a smooth connected commutative unipotent group $U$ in characteristic $p$, $U/E$ is split unipotent for some finite \'etale $E \subset U$ \cite[Lem.\,2.1.5]{Rosengarten}.)

While our main interest in the present paper lies in $p$-torsion phenomena, we will actually prove some more general results, dealing with $\ell$-torsion phenomena for all primes $\ell$, in particular in the case of question (2) above. Before stating our results in this direction, we require some definitions.

\begin{definition}
\label{localptower}
Let $K$ be a field, $\ell$ a prime. We say that an algebraic extension $L/K$ is an {\em $\ell$-primary extension} when,
for every finite subextension $L/F/K$, the degree $[F: K]$ is a power of $\ell$.
\end{definition}

\begin{definition}
Let $K$ be a global function field. We say that an algebraic extension field $L/K$ is {\em locally infinite} when,
for every place $w$ of $L$ lying over a place $v$ of $K$, the extension $L_w/K_v$ is infinite.
\end{definition}

It is easy to see that every global function field admits a (many, in fact) Galois, locally infinite $\ell$-primary extension for every prime $\ell$.
One example is the maximal abelian $\ell$-primary extension. Our first main result is the following theorem, which shows that the separable $\ell$-power extensions of global function fields
killing $\ell$-power torsion cohomology classes of tori are ubiquitous.

\begin{theorem}
\label{charpinfinite}
Let $L$ be a Galois, locally infinite $\ell$-primary extension of a global function field. Then for every $L$-torus $T$, ${\rm{H}}^i(L, T)[\ell^{\infty}] = 0$ for $i > 0$.
\end{theorem}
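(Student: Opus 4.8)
The plan is to reduce the asserted vanishing, by means of a quasi-trivial resolution of $T$, to the case $T = \Gm$ — where $\mathrm{H}^2$ is a Brauer group and the classical local--global principle over a global function field is available — after disposing of the high-degree cohomology separately by a cohomological-dimension argument.

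First I would pass to finite levels. Since $T$ is of finite type it is already defined over a finite subextension, so enlarging the base we may assume $T$ is a torus over the global function field $K$ with $L/K$ the given extension; writing $L = \varinjlim F$ over the finite subextensions $K \subseteq F \subseteq L$ — each a global function field with $[F:K]$ a power of $\ell$ — and using that $\mathrm{H}^i(-,T)$ may be computed in the fppf topology (as $T$ is smooth) and commutes with this filtered colimit, we obtain $\mathrm{H}^i(L,T) = \varinjlim_F \mathrm{H}^i(F,T)$. It therefore suffices to show that every $\ell$-primary torsion class over some finite level $F$ is killed by restriction to a suitable finite subextension $F' \subseteq L$ containing $F$.

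Next I would treat $i \geq 3$, where in fact $\mathrm{H}^i(F,T) = 0$ for every global function field $F$ and torus $T$: the group is torsion, and for each prime $q$ its $q$-primary part is a subquotient of $\mathrm{H}^i(F, T[q^n])$ via $0 \to T[q^n] \to T \xrightarrow{q^n} T \to 0$ (taken fppf when $q = p := \mathrm{char}\,K$), which vanishes for $i \geq 3$ — for $q \neq p$ because $\mathrm{cd}(F) \leq 2$, and for $q = p$ by d\'evissage of the finite flat group scheme $T[p^n]$ into pieces of order $p$, invoking $\mathrm{H}^{\geq 3}(F, \Gm) = 0$, $\mathrm{H}^{\geq 1}(F, \Ga) = 0$ and $\mathrm{cd}_p(F) \leq 2$. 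Thus $\mathrm{H}^i(L,T) = 0$ for $i \geq 3$. Now fix an exact sequence of $K$-tori $1 \to S \to Q \to T \to 1$ with $Q = \prod_j \Res_{K_j/K}\Gm$ quasi-trivial (possible because the character lattice $X^*(T)$ embeds $\Gal$-equivariantly into a permutation module). For $i = 2$, given $\alpha \in \mathrm{H}^2(L,T)[\ell^\infty]$ I would descend it to an $\ell$-primary class $\alpha_F \in \mathrm{H}^2(F,T)$; since $\mathrm{H}^3(F,S) = 0$ the map $\mathrm{H}^2(F,Q) \to \mathrm{H}^2(F,T)$ is surjective, hence surjective on $\ell$-primary parts (all groups being torsion), so $\alpha_F$ lifts to $\beta \in \mathrm{H}^2(F,Q)[\ell^\infty]$. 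By Shapiro's lemma $\mathrm{H}^2(F,Q)$ is a finite direct sum of Brauer groups $\mathrm{Br}(F_j)$ of finite separable extensions $F_j/F$, and each component $\beta_j \in \mathrm{Br}(F_j)[\ell^\infty]$ has $\ell$-power order, hence $\ell$-power local invariants, and is locally trivial outside a finite set of places. Because $L/K$ is Galois, locally infinite and $\ell$-primary, the local extensions carved out inside $L$ are Galois with pro-$\ell$ group and of unbounded finite degree up the tower; I would thus choose a single finite subextension $F' \subseteq L$, containing $F$, large enough that every one of the finitely many relevant local degrees of $F'F_j/F_j$ is an $\ell$-power exceeding the orders of all the $\beta_j$. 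The local--global splitting criterion for Brauer classes over the global function fields $F_j$ — injectivity $\mathrm{Br}(F_j) \hookrightarrow \bigoplus_u \mathrm{Br}((F_j)_u)$ together with the fact that a class is split by a finite extension exactly when its local index divides the local degree at every place — then shows that each $\beta_j$, hence $\alpha_F$, dies over $F'$, so $\alpha = 0$. Finally, for $i = 1$ the same resolution gives $\mathrm{H}^1(L,Q) = \bigoplus_j \mathrm{H}^1(L_j,\Gm) = 0$ by Hilbert 90, whence $\mathrm{H}^1(L,T) \hookrightarrow \mathrm{H}^2(L,S)$; taking $\ell$-primary parts and applying the case $i = 2$ to the torus $S$ yields $\mathrm{H}^1(L,T)[\ell^\infty] \hookrightarrow \mathrm{H}^2(L,S)[\ell^\infty] = 0$.

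The step I expect to be the main obstacle is the case $i = 2$: the real content is the bookkeeping that produces one finite subextension of $L$ simultaneously trivializing finitely many Brauer classes distributed over finitely many finite separable extensions of $F$, and this is precisely where the three hypotheses enter — "locally infinite" to force all the relevant local degrees to grow, "$\ell$-primary" so that those degrees are divisible by arbitrarily high powers of $\ell$ (matching the $\ell$-power local indices of the $\beta_j$), and "Galois" to control the Galois closures and conjugates of $F'$, so that the fields appearing in $F_j \otimes_F F'$ are again composita involving $L$. A secondary, purely technical point is the characteristic-$p$ input $\mathrm{H}^{\geq 3}(F,\Gm) = 0$ together with the fppf d\'evissage underlying the treatment of $i \geq 3$.
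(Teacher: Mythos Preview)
Your argument is correct and takes a genuinely different route from the paper's.

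The paper reduces Theorem~\ref{charpinfinite} to the analogous vanishing for ${\rm H}^2(L,M)$ with $M$ a finite multiplicative group scheme of $\ell$-power order (Theorem~\ref{charpinfinitemult}), and proves the latter in two steps using \v{C}esnavi\v{c}ius's extension of Tate duality to arbitrary finite commutative group schemes: first push the class into $\Sha^2$ by killing it locally via the local cup-product pairing (Proposition~\ref{canlandinsha}), then annihilate the $\Sha$-class via the global $\Sha$-pairing, after a delicate argument showing that all possible dual $\Sha^1$-classes arise from a fixed finite subgroup over a bounded subextension (Proposition~\ref{killingsha} together with Lemma~\ref{exhaust Sha}). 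By contrast, you bypass finite multiplicative groups entirely: the quasi-trivial resolution $1\to S\to Q\to T\to 1$ lets you lift an $\ell$-primary class in ${\rm H}^2(F,T)$ to ${\rm H}^2(F,Q)[\ell^\infty]$ (using ${\rm H}^3(F,S)=0$), and since ${\rm H}^2(F,Q)$ is a finite sum of Brauer groups of global function fields, the problem reduces to the classical Albert--Brauer--Hasse--Noether theorem plus the elementary observation that the composita $F_jL/F_j$ inherit the Galois, locally infinite, $\ell$-primary properties from $L/K$. Your treatment of $i=1$ via Hilbert~90 and the connecting map into ${\rm H}^2(L,S)$ is essentially the same idea as the paper's use of Ono's sequence.

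What each buys: your approach is considerably more elementary for this theorem alone---it avoids flat duality, the $\Sha$-pairings, and the exhaustion lemmas of \S\ref{exhaustionsection}, needing only classical class field theory. The paper's heavier machinery, however, is what makes the quantitative companion Theorem~\ref{charpquantitative} accessible: the two-step $\Sha$ argument tracks degrees carefully enough to produce the bound $(p^n)^{1+cm\log(m)^3}$, whereas the quasi-trivial resolution, while in principle controllable in terms of the splitting field, does not obviously yield such clean bookkeeping. Your final paragraph correctly identifies the one place requiring care---controlling all local degrees of $F'F_j/F_j$ simultaneously---and your use of Galois closures inside $L$ handles it.
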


Here ${\rm{H}}^i(L, T)[\ell^{\infty}]$ denotes the union over $n\ge 1$ of the subgroups
\[{\rm{H}}^i(L, T)[\ell^n]:= \ker\left[ {\rm{H}}^i(L, T)\xrightarrow{[\ell^n]} {\rm{H}}^i(L, T)\right].\]

\begin{remark}
The same result holds in the number field setting, provided that the definition of locally infinite is appropriately modified to account for the archimedean places. Namely, we say that an algebraic $\ell$-primary extension $L$ of a number field $K$ is {\em locally infinite} when the following conditions hold:
\begin{itemize}
\item[(1)] For every place $w$ of $L$ lying over a finite place $v$ of $K$,
the extension $L_w/K_v$ is infinite.
\item[(2)] If $\ell = 2$, then for every place $w$ of $L$ lying over a real place $v$ of $K$,
we have $L_w \simeq \C$.
\end{itemize}
The main reasons that we restrict our attention to the function field setting are (1) the main purpose of this paper is to deal with characteristic $p$ phenomena, and (2) we wish to avoid certain minor technical complications arising from real places.
\end{remark}

\begin{corollary}
\label{charp}
For any global function field $K$, prime $\ell$, $K$-torus $T$ and $\xi \in {\rm{H}}^i(K, T)[\ell^\infty]$ with $i > 0$, there is a finite $($separable$)$ abelian extension $F/K$ of $\ell$-power degree splitting $\xi$.
\end{corollary}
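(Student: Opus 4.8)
The plan is to reduce Corollary~\ref{charp} to Theorem~\ref{charpinfinite} via a routine passage to the limit. Let $L$ denote the maximal abelian $\ell$-primary extension of $K$, i.e.\ the compositum inside a fixed separable closure of $K$ of all finite abelian extensions of $K$ of $\ell$-power degree. As recalled just before Theorem~\ref{charpinfinite}, this $L$ is a Galois, locally infinite, $\ell$-primary extension of $K$; moreover, since $\Gal(L/K)$ is an abelian pro-$\ell$ group, every finite subextension $K \subseteq F \subseteq L$ is a finite abelian (automatically separable) extension of $K$ of $\ell$-power degree.

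Apply Theorem~\ref{charpinfinite} to $L$ and to the $L$-torus $T_L$: since $i > 0$, we obtain $\mathrm{H}^i(L, T_L)[\ell^\infty] = 0$. The restriction map $\mathrm{H}^i(K, T) \to \mathrm{H}^i(L, T_L)$ is a homomorphism of abelian groups, so it carries the $\ell$-power-torsion class $\xi$ into $\mathrm{H}^i(L, T_L)[\ell^\infty] = 0$; hence $\xi$ restricts to $0$ over $L$.

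Now I would use that \'etale cohomology of the finitely presented $K$-group scheme $T$ (which, being smooth, computes fppf and Galois cohomology interchangeably) commutes with filtered colimits of the base field: writing $L = \varinjlim_F F$ over the finite subextensions $K \subseteq F \subseteq L$, one has $\mathrm{H}^i(L, T_L) = \varinjlim_F \mathrm{H}^i(F, T_F)$ with transition maps given by restriction. Since the image of $\xi$ vanishes in this colimit, it already vanishes at some finite stage: there exists a finite subextension $K \subseteq F \subseteq L$ with $\mathrm{res}_{F/K}(\xi) = 0$ in $\mathrm{H}^i(F, T_F)$, i.e.\ $F$ splits $\xi$. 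By the first paragraph, $F/K$ is a finite abelian extension of $\ell$-power degree, which is exactly the claim. The only point requiring any care is the continuity statement $\mathrm{H}^i(L, T_L) = \varinjlim_F \mathrm{H}^i(F, T_F)$, which is standard — it follows from the compatibility of \'etale cohomology with cofiltered limits of qcqs schemes with affine transition maps, applied to $\Spec$ of the colimit ring — together with the elementary verification that the maximal abelian $\ell$-primary extension has the asserted Galois-theoretic properties; beyond that, all the content is in Theorem~\ref{charpinfinite}.
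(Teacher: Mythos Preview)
Your proof is correct and follows essentially the same approach as the paper's own proof: take $L$ to be the maximal abelian $\ell$-primary extension of $K$, apply Theorem~\ref{charpinfinite} to kill $\xi$ over $L$, and then invoke commutation of cohomology with filtered colimits (the paper cites \cite[Exp.\,VI, Cor.\,5.2]{sgaiv2}) to descend to a finite subextension. Your write-up is slightly more explicit about why finite subextensions of $L$ are abelian of $\ell$-power degree, but the argument is the same.
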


\begin{proof}
The maximal abelian $\ell$-primary extension $L$ of $K$ is a Galois, locally infinite $\ell$-primary extension. By Theorem \ref{charpinfinite}, $\xi$ is split by $L$. Because cohomology of tori (and in fact much more generally) commutes with filtered direct limits of rings \cite[Exp.\,VI, Cor.\,5.2]{sgaiv2}, and because $L$ may be written as the filtered direct limit of its subfields finite over $K$, we deduce that $\xi$ splits over some finite subextension $F/K$ of $L$.
\end{proof}

While Theorem \ref{charpinfinite} is interesting, and gives a strong form of an answer to question (2) above, it does not answer (1). As remarked earlier, a simple argument shows that, over any field, if $T$ is a $d$-dimensional torus, and $\xi \in {\rm{H}}^1(K, T)$ with $p := {\rm{char}}(K) \nmid \per(\xi)$, then there is a finite separable extension $L/K$ of degree dividing $\per(\xi)^d$ that splits $\xi$. We would like to obtain a similar bound in the case when $\per(\xi)$ is a power of $p$. That is, we wish to show that we may split $p^n$-torsion cohomology classes in separable $p$-primary extensions of degree bounded solely in terms of $n$ and the torus $T$.

Suppose that $T$ splits over a finite Galois extension of degree $m$, and let $\xi \in \mathrm{H}^1(K, T)[p^n]$. We will prove that, when $K$ is a global function field, one can find a finite separable extension $L/K$ of degree dividing $(p^n)^{m'}$ which splits $\xi$, where $m'$ depends only on $m$ in a uniform manner (independent of the global field or the torus $T$). If $d := {\rm{dim}}(T)$ as above, then the Galois lattice of $T$ is a representation $\rho\colon {\rm{Gal}}(K_s/K) \rightarrow {\rm{GL}}_d(\Z)$. The torus $T$ splits over the fixed field of $\ker(\rho)$, which has degree equal to the order of a finite subgroup of ${\rm{GL}}_d(\Z)$. By a theorem of Minkowski \cite{minkowski}, this order is bounded (in terms of $d$), hence we see that $m$ is bounded in terms of $d$. Thus our result below yields in particular a bound on the degree of the separable extension killing a cohomology class in terms of the dimension, just as in the case when $p\nmid \per(\xi)$.

\begin{theorem}
\label{charpquantitative}
There is an effectively computable universal constant $c > 0$ with the following property. For any global function field $K$, any $K$-torus $T$ which splits over a Galois extension of degree $m$, and any $\xi \in {\rm{H}}^i(K, T)[p^n]$ with $p := \mathrm{char}(K)$ and  $i > 0$, $\xi$ splits over a finite separable solvable $p$-primary extension of degree $\leq (p^n)^{1+cm{\rm{log}}(m)^3}$.
\end{theorem}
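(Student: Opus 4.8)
\emph{Strategy.} The plan is to strip $\xi$ down to a family of Brauer classes by dévissage through quasi-trivial tori, kill those by a Grunwald--Wang argument, and bookkeep degrees carefully enough to obtain the stated exponent. Since a global function field has cohomological dimension $2$ and $T$ is smooth, $\mathrm{H}^i(K,T)=0$ for $i\geq 3$, so we may assume $i\in\{1,2\}$. Fix a Galois extension $L/K$ of degree $m$ splitting $T$ and put $G=\Gal(L/K)$; for $i=1$, restriction--corestriction through $L$ together with Hilbert $90$ over $L$ shows $\mathrm{H}^1(K,T)$ is killed by $m$, so in fact $n\leq\ord_p(m)$ in that case.

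\emph{Dévissage to Brauer groups.} Because $X^*(T)$ is a $\Z[G]$-lattice of rank $\leq\dim T$, it fits into short exact sequences $0\to X^*(T)\to P\to E\to 0$ and $0\to E'\to P'\to X^*(T)\to 0$ with $P,P'$ permutation lattices and $E$ flasque, resp.\ $E'$ coflasque, which one may choose of ``combinatorial size'' (the number and sizes of the $G$-orbits in bases of the permutation lattices involved) bounded explicitly in terms of $m$ --- this is where the shape $m\log(m)^3$ of the exponent will originate. Dually these are short exact sequences of tori with quasi-trivial middle term, all split by $L$. Running the long exact cohomology sequences and using $\mathrm{H}^1(\cdot,\text{quasi-trivial})=0$ (Hilbert $90$), $\mathrm{H}^2(K,\R_{L_j/K}\Gm)=\Br(L_j)$, and $\mathrm{H}^3(\cdot,\text{torus})=0$, I would reduce the splitting of $\xi$ to two tasks: (i) splitting a $p^n$-torsion class in a finite direct sum $\bigoplus_j\Br(L_j)$ with each $[L_j:K]\mid m$ and the number of summands bounded in terms of $m$; and, after (i) is arranged over an intermediate field, (ii) splitting a residual class in $\mathrm{H}^1$ of an auxiliary (flasque or coflasque) torus split by $L$. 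The residual class in (ii) is automatically $p^n$-torsion, not merely of $p$-power order, because the ambient quasi-trivial torus has vanishing $\mathrm{H}^1$; and since $\mathrm{H}^1$ over a global function field of a torus split by $L$ is finite and killed by $L$, this residual group is a finite abelian group of $p$-exponent $\leq p^{\ord_p(m)}$ and of order bounded in terms of $m$.

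\emph{Solving (i) and (ii).} For (i): over a global function field a $p^n$-torsion Brauer class has, at each place, local invariant a $p^n$-torsion element of $\Q/\Z$, killed by the unramified cyclic extension of degree dividing $p^n$, and period equals index; I would then invoke the Grunwald--Wang-type approximation theorem developed in this paper to manufacture a single separable solvable $p$-primary extension $M/K$ whose completions at the relevant (finitely many) places are large enough to kill all these local invariants simultaneously, whence the class dies over $M$ by the Hasse principle for Brauer groups ($\Sha^2(\cdot,\Gm)=0$). The number of local conditions $M$ must satisfy --- one per Brauer component per relevant place --- is bounded in terms of $m$, and each costs a bounded power of $p^n$, giving $[M:K]\leq(p^n)^{1+O(m\log(m)^3)}$, with the leading ``$1$'' reflecting period $=$ index for the top class. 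For (ii): the residual term is finite, of controlled $p$-exponent and order; I would dispose of it by combining Poitou--Tate duality (to relate it to $\Sha$ of a dual torus) with a further application of the Grunwald--Wang input, at a cost again bounded in terms of $m$ and $n$. Compositing all the extensions produced --- all separable, solvable and $p$-primary --- yields the desired splitting field, of total degree still of the form $(p^n)^{1+cm\log(m)^3}$ for an effectively computable universal $c$.

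\emph{Main obstacle.} The essential difficulty is that the classical construction splitting a period-$p^n$ torus class --- adjoining $p^n$-th roots of finitely many elements --- is purely inseparable in characteristic $p$, which forces the entire local--global detour; the real work is then quantitative. The two hardest points are (a) proving the solvable $p$-primary Grunwald--Wang approximation with an \emph{explicit} degree bound, and (b) bounding, in terms of $m$ alone, both the combinatorial size of a flasque resolution of $X^*(T)$ and the degree of the extension needed to dispose of the finite residual term in (ii). It is precisely the interaction of (a) and (b) that produces the exponent $1+cm\log(m)^3$, rather than a bound depending only on $\dim T$.
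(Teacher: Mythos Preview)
Your approach differs substantively from the paper's: rather than d\'evissage through flasque/coflasque resolutions of the character lattice, the paper reduces immediately to $\mathrm{H}^2$ of a finite multiplicative group scheme (via $T[p^n]$ for $i=2$, or via Ono's theorem for $i=1$), then works entirely with Tate duality for finite groups and a cocycle-level extension of the $\Sha$-pairing (Propositions~\ref{canlandinsha} and~\ref{killingsha}). The exponent $m\log(m)^3$ arises not from any ``combinatorial size'' of a resolution, but from the $m!$ in Lemma~\ref{exhaustingpts} (stabilising points of the Cartier dual), which contributes $\sim m\log m$, and then two iterated applications of the Grunwald--Wang bound in Theorem~\ref{realizinglocexts}, each contributing a logarithmic factor.

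There is a genuine gap in your step~(ii). You assert that ``$\mathrm{H}^1$ over a global function field of a torus split by $L$ is finite\ldots of order bounded in terms of $m$'', but this is false: for the norm-one torus $S' = \mathrm{R}^{(1)}_{L/K}(\Gm)$ one has $\mathrm{H}^1(K,S') \cong K^\times/N_{L/K}(L^\times)$, which is infinite for any nontrivial $L/K$. The residual class $\eta\in\mathrm{H}^1(M,S')$ that your d\'evissage produces therefore lives in an infinite group, and there is no evident way to bound the degree of an extension killing it. Even if you first push $\xi$ into $\Sha^2$ (so that $\eta$ lands in the finite group $\Sha^1(M,S')$), you are left with precisely the problem you started with --- splitting a $p^n$-torsion class for a torus split by a degree-$m$ extension --- so the d\'evissage does not terminate. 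Relatedly, the rank of the permutation lattice in any flasque or coflasque resolution of $X^*(T)$ is at least $\dim T$, which is not bounded in terms of $m$, so your claim that its ``combinatorial size'' can be bounded solely by $m$ cannot hold. The paper's substitute for your step~(ii) is the most delicate part of the argument: one shows (Lemma~\ref{exhaust Sha}) that a single finite subgroup $A\subset\mathrm{H}^1(F,\widehat{M})$ surjects onto $\Sha^1(F',\widehat{M})$ for \emph{all} finite intermediate $F'$, then works at the level of \v{C}ech cocycles to force the pairing of $\xi$ with every $a\in A$ to vanish after a controlled extension --- the subtlety being that the $a$'s need not themselves lie in $\Sha^1$ over the base field, so the usual $\Sha$-pairing is not directly available.
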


Note that, in a sense, $m$ rather than $d = \mathrm{dim}(T)$ is the natural parameter above, as suggested, for instance, by Hilbert's theorem 90. One could modify the argument to kill $\xi$ in an abelian $p$-primary extension, but one would then obtain from our argument a bound of the shape $p^{f(m, n)}$ for some function $f$ that is linearithmic in $n$ rather than linear as in the case when $p\nmid \per(\xi)$. It is an interesting question to determine what the correct order of growth for the exponent in Theorem \ref{charpquantitative} should be, including for various different types of extensions. (The vanishing of $p$-primary cohomology classes in large separable $p$-primary extensions is a rather general and robust phenomenon, as illustrated by Theorem \ref{charpinfinite}.)

\begin{remark}
\label{vanishesbeyonddeg2}
In the above theorems, the cohomology of $T$ vanishes beyond degree $2$, as follows from the global field case \cite[Proposition 3.2]{Rosengarten},
together with the fact that every pair $(T, \xi)$, where $\xi\in {\rm{H}}^i(L,T)$ with $L/K$ algebraic,
descends to some subextension of $L$ which is a global field, which follows from \cite[Exp.\,VI, Cor.\,5.2]{sgaiv2}.  Thus the only interesting cases are when $i = 1, 2$.
\end{remark}

\begin{remark}
Although we will prove Theorems \ref{charpinfinite} and \ref{charpquantitative} together, most of the work comes from the latter. Thus, for the reader who wishes to skip this and concentrate only on \ref{charpinfinite}, much of the presentation in the paper could be skipped. In particular, one could skip all of the intermediate results before the proof of Theorem \ref{charpinfinitemult} is given except for Lemma \ref{exhaust Sha}(i) sans the bound on $[F: K]$, Lemma \ref{Brlemma}, and case (1) in Propositions \ref{canlandinsha} and \ref{killingsha}.
\end{remark}

We will prove the above results by establishing analogous statements for ${\rm{H}}^2(K, M)$ with $M$ a finite multiplicative group scheme -- that is, a finite group scheme of multiplicative type, from which the statements for tori will follow easily.

\begin{theorem}
\label{charpinfinitemult}
Let $L$ be a Galois, locally infinite $\ell$-primary extension of a global function field. Then for every finite multiplicative $L$-group scheme $M$ of $\ell$-power order, ${\rm{H}}^2(L, M) = 0$.
\end{theorem}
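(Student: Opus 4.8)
The plan is to reduce, by a sequence of standard manipulations, to the vanishing of an $\ell$-torsion Brauer group, which one then kills using local infiniteness together with class field theory.

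First I would pass to finite level. Since $L$ is the filtered union of its subfields finite over the base, and since both the formation of finite multiplicative group schemes and their fppf cohomology commute with filtered colimits of rings \cite[Exp.\,VI, Cor.\,5.2]{sgaiv2}, it suffices to show that for every global function field $K \subseteq L$ finite over the base, every finite multiplicative $K$-group scheme $M$ of $\ell$-power order, and every $\xi \in \mathrm{H}^2(K, M)$, one has $\Res_{L/K}(\xi) = 0$.

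Next I would strip off the prime-to-$\ell$ part of the Galois action on $M$. Let $E/K$ be the (finite Galois) splitting field of $M$, let $P \leq \Gal(E/K)$ be a Sylow $\ell$-subgroup, and set $K' := E^{P}$. Then $[K':K]$ is prime to $\ell$; the extension $E/K'$ is Galois with group $P$, an $\ell$-group, so $M_{K'}$ splits over the $\ell$-primary extension $E/K'$; and $LK'/K'$ is again a Galois, locally infinite $\ell$-primary extension of a global function field. It therefore suffices to prove $\mathrm{H}^2(LK', M_{K'}) = 0$: granting this, $\Res_{LK'/K}(\xi) = 0$, whence $[LK':L]\cdot\Res_{L/K}(\xi) = \Cor_{LK'/L}\bigl(\Res_{LK'/K}(\xi)\bigr) = 0$, and since $[LK':L]$ divides the prime-to-$\ell$ integer $[K':K]$ while $\Res_{L/K}(\xi)$ is annihilated by the order of $M$ (a power of $\ell$), we get $\Res_{L/K}(\xi) = 0$. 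So we may assume from now on that $M$ splits over an $\ell$-primary Galois extension, i.e. that $\Gal(E/K)$ is an $\ell$-group.

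Now I would run a d\'evissage down to $\mu_\ell$. Because $\Gal(E/K)$ is an $\ell$-group, the group algebra $\F_\ell[\Gal(E/K)]$ is local with residue field $\F_\ell$; hence the character group $\widehat{M}$, a finite $\Gal(E/K)$-module killed by a power of $\ell$, admits a filtration by $\Gal(E/K)$-submodules whose successive quotients are $\Z/\ell$ with trivial action. Dualizing, $M$ admits a filtration $0 = M_0 \subset M_1 \subset \cdots \subset M_t = M$ by $K$-subgroup schemes with $M_j/M_{j-1} \cong \mu_\ell$. The long exact cohomology sequences attached to $0 \to M_{j-1} \to M_j \to \mu_\ell \to 0$, together with an induction, reduce the problem to showing $\mathrm{H}^2(L, \mu_\ell) = 0$. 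Finally, the fppf Kummer sequence (exact in any characteristic, as $[\ell]\colon \Gm \to \Gm$ is faithfully flat) and $\Pic(\Spec L) = 0$ give $\mathrm{H}^2(L, \mu_\ell) \cong \Br(L)[\ell]$; a class therein is defined over $\Br(K)$ for some finite $K \subseteq L$, so by class field theory it is determined by finitely many local invariants of order dividing $\ell$, and it dies in any finite Galois subextension of $L/K$ whose local degrees at those finitely many places are divisible by $\ell$ — such a subextension exists because $L/K$ is locally infinite.

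The step demanding the most care is the prime-to-$\ell$ base change: the splitting field of $M$ need be neither $\ell$-primary nor contained in $L$, so its prime-to-$\ell$ part must be absorbed by passing to a field $K'$ \emph{outside} $L$, and one must verify — via the restriction--corestriction identity above — that this manoeuvre is invisible to the $\ell$-primary torsion class $\xi$. Everything else is routine verification. (The paper instead organizes the proof around first moving $\xi$ into $\Sha^2$ and then killing $\Sha^2$, via local duality — Lemma \ref{Brlemma} — and a Chebotarev/Grunwald--Wang analysis; that formulation is what the quantitative Theorem \ref{charpquantitative} requires.)
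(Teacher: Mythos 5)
Your proof is correct, but it follows a genuinely different and more elementary route than the paper's. The paper fixes a class $\xi \in {\rm{H}}^2(K,M)[\ell]$ and runs a two-stage duality argument entirely inside $L$: first it pushes $\xi$ into $\Sha^2(H,M)$ for a finite subextension $H$ of $L$ using \v{C}esnavi\v{c}ius' local Tate duality (pairing against $\widehat{M}(H_v)$ and exploiting that invariants multiply by local degrees), and then it kills the resulting $\Sha^2$-class using the perfect global $\Sha$-pairing, Lemma \ref{exhaust Sha}, and a delicate cocycle-level extension of that pairing. You instead absorb the prime-to-$\ell$ part of the splitting field by a restriction--corestriction trick through $K' = E^P$, filter $M$ (via the locality of $\F_\ell[\Gal(E/K')]$ and Cartier duality) with graded pieces $\mu_\ell$, and reduce everything to $\Br(L)[\ell]=0$, which follows from classical class field theory plus local infiniteness; all the steps check out, including the verification that $LK'/K'$ is again Galois, locally infinite and $\ell$-primary (since $L\cap K'=K$), and the existence of fppf corestriction along the finite separable extension $LK'/L$ with $\Cor\circ\Res=[LK':L]$ acting invertibly on the $\ell$-primary group ${\rm{H}}^2(L,M)$. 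What your argument buys is a proof of Theorem \ref{charpinfinitemult} (and hence of Theorems \ref{charpinfinite} and Corollary \ref{charp}) that avoids Tate duality for finite group schemes and the $\Sha$-pairing machinery altogether; what it gives up is exactly what you flag at the end: the detour through $K'$, which is neither $p$-primary over $K$ nor contained in $L$, makes the construction unusable for the quantitative Theorems \ref{charpquantitative} and \ref{charpinfinitequantitativemult}, where the splitting field must be a solvable $p$-primary extension of the original $K$ of controlled degree. The only points worth tightening in a write-up are (a) a sentence justifying the fppf corestriction (e.g.\ via the norm map ${\rm{R}}_{LK'/L}(M)\to M$ and exactness of Weil restriction along finite \'etale morphisms), and (b) the remark that after descending a class of $\Br(L)[\ell]$ to a finite subextension one must enlarge that subextension once more so the descended class is itself $\ell$-torsion before reading off its local invariants.
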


\begin{theorem} \label{charpinfinitequantitativemult}
There is an effectively computable universal constant $c > 0$ with the following property: For any finite multiplicative $K$-group scheme $M$ over a global function field $K$ such that $M$ splits over a finite Galois extension of degree $\leq m$, and any $\xi \in {\rm{H}}^2(K, M)[p^n]$ with $p := \mathrm{char}(K)$, $\xi$ splits over a finite separable solvable $p$-primary extension of degree $\leq (p^n)^{1+cm{\rm{log}}(m)^3}$.
\end{theorem}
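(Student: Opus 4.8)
The plan is to split $\xi$ in two stages. First I would pass to a controlled separable solvable $p$-primary extension $K_1/K$ over which $\xi$ becomes everywhere locally trivial, i.e.\ $\xi|_{K_1}\in\Sha^2(K_1,M)$ (this is the content of Proposition \ref{canlandinsha}); then I would pass to a second such extension $K_2/K_1$ over which $\xi|_{K_2}$ vanishes outright (Proposition \ref{killingsha}). Since degrees multiply in a tower and a tower of abelian extensions is solvable and $p$-primary, it suffices to bound $[K_1:K]$ by $(p^n)^{1+O(\log m)}$ and $[K_2:K_1]$ by $(p^n)^{O(m\log(m)^3)}$; when $M$ is split the second stage is vacuous since $\Sha^2=0$ there, so the normalization of the exponent at $1$ is forced by the first stage alone.

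\emph{First stage (landing in $\Sha$).} Embed $M$ into a torus $S=(\R_{L/K}\Gm)^r$ induced from a splitting field $L/K$ of degree $\le m$, and set $T:=S/M$, a torus split over $L$; from $\mathrm{H}^1(K,T)\to\mathrm{H}^2(K,M)\to\mathrm{H}^2(K,S)=\Br(L)^r$ and the vanishing of the Brauer and unramified‑torus contributions at places of good reduction one sees that the set $\Sigma$ of places $v$ with $\xi_v:=\xi|_{K_v}\ne 0$ is finite. For each $v\in\Sigma$ I produce a separable abelian $p$-primary $K_v'/K_v$ of degree bounded by a power of $p^n$ depending only on $m$ that kills $\xi_v$: the image $\bar\xi_v\in\prod_{w\mid v}\Br(L_w)$ is $p^n$-torsion, hence killed by a cyclic $p$-primary local extension, and the residual class in $\mathrm{H}^1(K_v',T)$ is killed cheaply using that $\xi$ is $p$-primary together with a $p$-Sylow argument on $\Gal(L/K)$. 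Then the paper's quantitative Grunwald--Wang approximation theorem yields a single global separable abelian $p$-primary $K_1/K$, of degree essentially the least common multiple of the local degrees, whose completion at each place above $\Sigma$ contains the corresponding $K_v'$; since $\mathrm{char}(K)=p$, no Grunwald--Wang special case intervenes. As $\xi|_{(K_1)_w}=\Res(\xi_v)$ is automatically zero for $v\notin\Sigma$ and is zero by construction for $v\in\Sigma$, we get $\xi|_{K_1}\in\Sha^2(K_1,M)$.

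\emph{Second stage (killing $\Sha$).} By Poitou--Tate duality $\Sha^2(K_1,M)\cong\Sha^1(K_1,M^D)^{\vee}$, where $M^D$ is the finite étale $p$-primary Cartier dual, and $\Sha^2$ vanishes for a split multiplicative group; so the task is to find a separable solvable $p$-primary $K_2/K_1$ along which the corestriction $\Sha^1(K_2,M^D)\to\Sha^1(K_1,M^D)$ lands in the kernel of the functional attached to $\xi$. The honest difficulty is that the splitting field of $M$ is in general \emph{not} $p$-primary, so one cannot simply adjoin it; instead I would peel off the prime-to-$p$ part of the splitting group $G$ (of order $\le m$), exploiting that $\xi$, being $p$-primary, is recovered up to a prime-to-$p$ unit from its restriction–corestriction along the fixed field of a $p$-Sylow of $G$, thereby reducing to a class that genuinely lives over, and is split by, a $p$-power extension; local solutions are again globalized via the Grunwald--Wang results, and the bookkeeping for a presentation of $G$ by $O(\log m)$ generators of order $\le m$, iterated through the argument, is what produces the $m\log(m)^3$.

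The main obstacle, where essentially all the work lies, is the uniform quantitative control: (i) the local degree estimates for killing $\xi_v$ and the residual torus classes must depend only on $m$ and $n$ and not on $|M|$, which forces one to annihilate all the ``directions'' of $M$ at once with a single cyclic extension rather than coordinate by coordinate; (ii) one needs an effective Grunwald--Wang theorem producing a global abelian $p$-primary extension realizing prescribed bounded-degree completions at a finite set of places; and (iii) one must disentangle the non-$p$ part of the splitting degree $m$ while only ever enlarging by $p$-primary extensions. I expect (iii), handled through $p$-Sylow subgroups of the splitting group together with corestriction arguments using the $p$-primarity of $\xi$, to be the conceptual crux, and (i)--(ii) to be where the explicit exponent $1+cm\log(m)^3$ is finally pinned down.
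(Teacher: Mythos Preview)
Your two--stage scaffold (first force $\xi$ into $\Sha^2$, then kill the $\Sha$ class) and your citations of Propositions \ref{canlandinsha} and \ref{killingsha} match the paper exactly, as does the observation that the split case makes the second stage vacuous. However, the content you sketch for each proposition is not what the paper does, and in the second stage your sketch has a genuine gap.

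For stage~1 the paper does not embed $M$ in an induced torus and chase a residual $\mathrm{H}^1$ of $T=S/M$. It uses local Tate duality directly: $\mathrm{H}^2(K_v,M)\times \widehat{M}(K_v)\to\Q/\Z$ is perfect, so one first passes to a local subextension of degree $\le m!$ saturating the $\widehat{M}$--points (Lemma~\ref{exhaustingpts}), then to a degree--$p$ extension so that the cup product with every such point vanishes; globalization is via Theorem~\ref{realizinglocexts}. Your claim that stage~1 costs only $(p^n)^{1+O(\log m)}$ is more optimistic than what the paper proves: already here one gets $p^{1+cm\log(m)^3}$ (for $n=1$, to which the paper reduces by induction). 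Your torus--embedding route may be salvageable, but ``the residual class in $\mathrm{H}^1(K_v',T)$ is killed cheaply'' is exactly the step that is not cheap and that the paper's duality argument circumvents.

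Stage~2 is where your proposal diverges seriously. The paper does \emph{not} use $p$--Sylow subgroups of the splitting group, restriction--corestriction, or any presentation of $G$ by $O(\log m)$ generators; that is not where $m\log(m)^3$ comes from. The actual argument is: use Lemma~\ref{exhaust Sha} to produce, over a subextension $F$ of degree $\le m!$, a \emph{finite} set $A\subset\mathrm{H}^1(F,\widehat{M})$ whose images exhaust $\Sha^1(F',\widehat{M})$ for every further finite $F'\subset L$. The subtlety your sketch misses is that the elements of $A$ need not themselves lie in $\Sha^1(F,\widehat{M})$, so one cannot simply invoke the $\Sha$--pairing. The paper therefore builds a cochain--level pairing $\langle\alpha,\eta',(\beta_v)_v\rangle$ extending the $\Sha$--pairing, and via an explicit cocycle computation (the $\zeta_a$ and $\gamma_v$ identities, plus Lemma~\ref{Brlemma}) shows that passing to an extension with $\ell\mid[F''_w:F'_v]$ at finitely many bad places annihilates $\langle\alpha,a,(\beta_v)\rangle_{F''}$ for every $a\in A$, hence $\xi_{F''}\perp\Sha^1(F'',\widehat{M})$ and $\xi_{F''}=0$. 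The exponent $m\log(m)^3$ arises from $\log_p(m!)\le c\,m\log m$ feeding through the $n(1+\log n)$ cost of Theorem~\ref{realizinglocexts}, applied twice. Your proposed corestriction mechanism does not address the core difficulty that the generators of future $\Sha^1$ groups are not in $\Sha^1$ over the current field, and it is unclear how it would yield the stated bound.
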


The basic idea behind the proofs of all of our main theorems above is as follows. Let $M$ be a finite multiplicative group scheme over a global function field $K$, and suppose given a cohomology class $\xi \in \mathrm{H}^2(K, M)[p^n]$. The proof that $\xi$ vanishes in a suitably large extension $L/K$ then proceeds in two steps. First, we prove that by passing to a finite extension $F/K$ we can ensure that $\xi_F \in \Sha^2(F, M)$, the Tate-Shafarevich group. Then we show that we can split $\xi$ by passing to a further finite extension. At the heart of both steps lies \v{C}esnavi\v{c}ius' extension of Tate duality to general finite commutative group schemes over local and global function fields \cite{cesnavicius}.

To obtain $\xi_F \in \Sha^2(F, M)$, we first note that by general results, $\xi$ is automatically trivial in $\mathrm{H}^2(K_v, M)$ for all but finitely many places $v$ of $K$. For the finite list of places at which $\xi$ does not vanish, we prove the existence of a finite subextension $E^v/K_v$ of the extension $L_w/K_v$ (where $w\mid v$ is a place of $L$) with the property that $\widehat{M}(E^v) = \widehat{M}(L_w)$, where $\widehat{M}$ is the (\'etale) Cartier dual of $M$. We can then pass to a further finite subextension $L_w/F^v/E^v$ to ensure that $\xi$ pairs trivially with every element of $\widehat{M}(E^v) = \widehat{M}(F^v)$ under the local duality (cup product) pairing. Tate local duality then implies that $\xi_{F^v} = 0$. Then we prove Grunwald-Wang type results which prove the existence of a suitable global subextension $L/F/K$ realizing all of these (finitely many) local extensions to conclude that $\xi_F$ vanishes everywhere locally.

In step two, to obtain the splitting under the further assumption that $\xi \in \Sha^2(K, M)$, we prove the existence of a finite subextension $L/E/K$ such that, for every finite subextension $L/F/E$, the image of the map $\mathrm{H}^1(E, \widehat{M}) \rightarrow \mathrm{H}^1(F, \widehat{M})$ contains $\Sha^1(F, \widehat{M})$. An extremely subtle point is that, while there are finitely many cohomology classes $\alpha_i$ over $E$ which hit all of the $\Sha$ classes over larger subextensions, the $\alpha_i$ may fail to lie in $\Sha$ over $E$ (even though they do over some larger extension), and indeed, may be nontrivial at an infinite set of places of $E$. Especially in the setting of our main quantitative theorems, this presents a problem because, while we can bound the degree of each local extension needed to kill $\alpha_i$ at a given place, we have infinitely many local extensions, so there is no guarantee that we can approximate them all simultaneously by a global extension efficiently (that is, by a global extension of suitably bounded degree).

This issue presents some serious technical difficulties, but to convey the basic idea, let us assume that all of the classes $\alpha_i$ lie in $\Sha^1(E, \widehat{M})$. Then we may use the perfect global duality pairing $$\Sha^2(E, M) \times \Sha^1(E, \widehat{M}) \rightarrow \Q/\Z$$ to pair $\xi$ with all of the $\alpha_i$. It is then easy to show that, upon passage to a suitable larger subextension $L/F/K$, $\xi$ pairs trivially with all of the $\alpha_i$, and because the $\alpha_i$ yield all of the possible $\Sha$ classes even over the larger subextension $F$ of $L$, it follows that $\xi_F$ pairs trivially with all of $\Sha^1(F, \widehat{M})$, and therefore vanishes.

We now briefly summarize the contents of the paper. In \S\ref{grunwaldwangsection}, we prove strengthened versions of the classical Grunwald-Wang theorem on realizing a prescribed finite list of abelian local extensions via a global extension when the extensions in question are $p$-primary. In \S\ref{exhaustionsection}, we prove a couple of crucial lemmas that allow us to pull down phenomena in potentially infinite algebraic extensions to finite subextensions (for instance, the step above in which we say that there is a finite local subextension $L_w/E^v/K_v$ realizing all of the $L_w$-points of $\widehat{M}$). Finally, in \S\ref{splittingclassessection}, we prove our main results using the two-step approach outlined above.

\subsection*{Acknowledgements}
I thank Mikhail Borovoi and Zinovy Reichstein for discussions which led to the question that is the topic of the present paper.

\section{$p$-primary Grunwald-Wang phenomena}
\label{grunwaldwangsection}

Essential to the proofs of our main quantitative theorems will be an analogue of the classical Grunwald-Wang theorem, which asserts that, for a global field $K$ and a 
finite set $S$ of places of $K$, given finite cyclic extensions $E^v/K_v$ for each $v \in S$, there is a cyclic extension of global fields $E/K$ of degree equal to the least 
common multiple of the $[E^v: K_v]$, or twice this number, realizing all of these extensions -- that is, there is for each $v \in S$ a place $w\mid v$ of $E$ and a $K_v$-
isomorphism $E_w \simeq E^v$ (equivalently, because $E/K$ is Galois, this holds for every place $w\mid v$ of $E$); see \cite[Ch.\,X, \S2, Th.\,5]{artintate}, which gives 
an adelic formulation of this result.

We shall require a stronger version of this result in characteristic $p$ (so $K$ is a global function field) when the extensions are all $p$-primary and are merely assumed to be abelian (rather than cyclic). In fact, we shall prove (Theorem \ref{realizinglocexts}):
\\

\noindent (1) Given a finite abelian $p$-primary extension $F/K$, finitely many $v \in S$, and $p$-primary extensions $E^v/F_v$ with $E^v/K_v$ abelian, there is a $p$-primary extension $E/F$ which is abelian over $K$, such that $E$ realizes all of the local extensions, and with the degree of $E$ bounded in terms of the degrees of the local extensions and $[F: K]$.
\\

\noindent (2) In the case $F = K$, if $H$ is a group admitting embeddings from all of the $\Gal(E^v/K_v)$ (a clear necessary condition for the conclusion, as the global Galois group must contain all of the decomposition groups), then one may choose $E$ so that $\Gal(E/K)$ embeds as a subgroup of $H$.
\\

Of course, by class field theory, both assertions boil down to statements about the idele class group, and -- as in \cite{artintate}, it is via the properties of this group that we shall proceed. Throughout the remainder of the present section, $K$ denotes a global function field of characteristic $p$. We denote by $\A_K$, or simply $\A$ when $K$ is clear from context, the ring of adeles of $K$, and for a finite set $S$ of places of $K$, we denote by $\A^S$  the restricted product over $v \notin S$ of the fields $K_v$ with respect to $\calO_v$.
The ideles of $K$ are the units $\A^{\times}$ of $\A$ endowed with their usual topology (restricted product of the $K_v^{\times}$ with respect to the subgroups $\calO_v^{\times}$), and similarly for $(\A^S)^{\times}$. As usual, we regard $K^{\times}$ as sitting inside $(\A^S)^{\times}$ via the diagonal embedding, and for $v\notin S$ we regard $K_v^{\times}$ as a subgroup of $(\A^S)^{\times}$ via the embedding which is the identity on the $v$ factor and $1$ on all other factors. The key will be to study the $p$th power map on $(\A^S)^{\times}/K^{\times}$ -- as well as on subgroups of finite index -- for a finite set $S$ of places of $K$.
This is the object of the next series of lemmas.

\begin{lemma}
\label{pthpowerfinindexsubgp}
Let $B \subset K^{\times}$ be a finitely generated subgroup, and let $v$ be a place of $K$. There is a neighborhood $U_B \subset K_v^{\times}$ of $1$ such that $B \cap (U_B\cdot(K_v^{\times})^p) \subset (K^{\times})^p$.
\end{lemma}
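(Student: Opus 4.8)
The plan is to deduce the statement from two auxiliary facts, one topological and one arithmetic: \textbf{(i)} the subgroup $(K_v^{\times})^p$ is closed in $K_v^{\times}$; and \textbf{(ii)} $K^{\times} \cap (K_v^{\times})^p = (K^{\times})^p$. Granting these, I would consider the topological group $Q := K_v^{\times}/(K_v^{\times})^p$, which is Hausdorff by (i), and let $\overline{B} \subseteq Q$ be the image of $B$. Since $B$ is finitely generated and $Q$ is killed by $p$, the group $\overline{B}$ is a finitely generated $\F_p$-vector space, hence finite; being a finite subset of a Hausdorff space, $\overline{B} \setminus \{1\}$ is closed in $Q$, so its complement is an open neighborhood $\overline{U}$ of $1$ with $\overline{U} \cap \overline{B} = \{1\}$. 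Taking for $U_B$ the preimage of $\overline{U}$ under the projection $K_v^{\times} \rightarrow Q$ produces an open neighborhood of $1$ satisfying $U_B \cdot (K_v^{\times})^p = U_B$; and if $b \in B \cap (U_B \cdot (K_v^{\times})^p) = B \cap U_B$, then its image in $Q$ lies in $\overline{U} \cap \overline{B} = \{1\}$, so $b \in (K_v^{\times})^p \cap K^{\times} = (K^{\times})^p$ by (ii), which is exactly the conclusion sought.

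For fact (i), I would use that the valuation together with a choice of uniformizer identifies $K_v^{\times}$, as a topological group, with $\Z \times \calO_v^{\times}$ where $\Z$ is discrete; under this identification $(K_v^{\times})^p$ corresponds to $p\Z \times (\calO_v^{\times})^p$, which is closed since $p\Z$ is closed in $\Z$ and $(\calO_v^{\times})^p$, the image of the compact group $\calO_v^{\times}$ under the continuous $p$-power map, is closed in $\calO_v^{\times}$. For fact (ii), I would note that $K \cap K_v^p$ is an intermediate field of $K/K^p$, and that $[K : K^p] = p$ because $K$ is a global function field of characteristic $p$; hence $K \cap K_v^p$ equals $K^p$ or $K$. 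It cannot equal $K$, for that would give $K^{1/p} \subseteq K_v$, hence $t^{1/p} \in K_v$ for any $t \in K^{\times}$ with $v(t) = 1$, which is absurd since then $p$ would divide $v(t) = 1$ in the value group $\Z$ of $K_v$. Therefore $K \cap K_v^p = K^p$, and so $K^{\times} \cap (K_v^{\times})^p \subseteq K \cap K_v^p = K^p$, i.e. $K^{\times} \cap (K_v^{\times})^p = (K^{\times})^p$.

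The step I would flag as the main obstacle is fact (ii): the assertion that an element of $K^{\times}$ which becomes a $p$-th power in the completion $K_v$ is already a $p$-th power in $K^{\times}$. This is precisely where the hypotheses $\mathrm{char}(K) = p$ and the one-dimensionality of $K$ enter, and although its proof is short once phrased in terms of $[K : K^p] = p$ and the value group of $v$, it is the genuinely arithmetic input; by comparison, the topological fact (i) and the finiteness of $\overline{B}$ are routine. (One can avoid the quotient $Q$ and argue directly: choosing representatives in $B$ of the finitely many nontrivial classes of $B/(B \cap (K_v^{\times})^p)$, the corresponding cosets of $(K_v^{\times})^p$ are closed by (i) and, by (ii), none of them contains $1$, so one may take $U_B$ to be the complement of their union.)
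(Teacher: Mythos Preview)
Your proof is correct and essentially the same as the paper's. The paper works with representatives of $B/(B\cap (K^{\times})^p)$ and observes that none of them lie in $(K_v^{\times})^p$ ``because $K_v/K$ is separable''---which is exactly your fact (ii), for which you supply a direct argument via $[K:K^p]=p$ and the value group; the parenthetical variant you give at the end is virtually word-for-word the paper's argument.
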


\begin{proof}
The group $B/(B \cap (K^{\times})^p)$ is finitely generated and $p$-torsion, hence finite. Let $R \subset B$
 denote a finite set of representatives for the nonzero elements. Because $K_v/K$ is separable and $R \cap (K^{\times})^p = \emptyset$, one has $R \cap (K_v^{\times})^p = \emptyset$. Since the set of $p$th powers in $K_v^{\times}$ is closed, there is a neighborhood $U \subset K_v^{\times}$ of $1$ such that $R \cap (U(K_v^{\times})^p) = \emptyset$. Thus, if $b \in B\backslash(K^{\times})^p$, then $b \in R(K_v^{\times})^p$, so $b \notin U(K_v^{\times})^p$.
\end{proof}

\begin{lemma}
\label{cokerptorsfree}
The group $(\A^S)^{\times}/\overline{K^{\times}}$ is $p$-torsion-free, where $\overline{K^{\times}}$ denotes the closure.
\end{lemma}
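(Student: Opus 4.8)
The plan is to unwind what $p$-torsion-freeness means for this quotient: we must show that if $x \in (\A^S)^{\times}$ satisfies $x^p \in \overline{K^{\times}}$, then $x \in \overline{K^{\times}}$. So fix a basic open neighborhood $\mathcal{U}$ of $x$; it is cut out by a finite set $T$ of places of $K$ (necessarily disjoint from $S$), which we enlarge if necessary so that $x_v \in \calO_v^{\times}$ for every $v \notin T$ and so that $T \neq \emptyset$, together with open neighborhoods $\mathcal{U}_v \ni x_v$ in $K_v^{\times}$ for $v \in T$, the condition at the remaining places being membership in $\calO_v^{\times}$. Our goal is to produce an element of $K^{\times} \cap \mathcal{U}$. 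I will use the elementary local fact that, since $\mathrm{char}(K_v) = p$, the $p$-th power map $K_v^{\times} \to K_v^{\times}$ is injective and is a homeomorphism onto the closed subgroup $(K_v^{\times})^p$ (immediate from $K_v^{\times} \cong \Z \times \calO_v^{\times}$ and compactness of $\calO_v^{\times}$). Consequently, for each $v \in T$ there is an open neighborhood $\mathcal{W}_v \ni x_v^p$ in $K_v^{\times}$ with the following ``root-pullback'' property: if $t \in K_v^{\times}$ and $t^p \in \mathcal{W}_v$, then $t \in \mathcal{U}_v$.

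The key construction is then as follows. The group $\calO_{S \cup T}^{\times}$ of $(S \cup T)$-units of $K$ (that is, those $u \in K^{\times}$ with $v(u) = 0$ for all $v \notin S \cup T$) is finitely generated, by the function field unit theorem, so Lemma \ref{pthpowerfinindexsubgp}, applied with $B := \calO_{S \cup T}^{\times}$ and a chosen place $v_0 \in T$, produces a neighborhood $U_B \ni 1$ in $K_{v_0}^{\times}$ with $\calO_{S \cup T}^{\times} \cap (U_B \cdot (K_{v_0}^{\times})^p) \subseteq (K^{\times})^p$. Since $x_{v_0}^p \in (K_{v_0}^{\times})^p \subseteq U_B \cdot (K_{v_0}^{\times})^p$ and the latter set is open, we may shrink $\mathcal{W}_{v_0}$ so that moreover $\mathcal{W}_{v_0} \subseteq U_B \cdot (K_{v_0}^{\times})^p$. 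Let $\mathcal{W}$ be the basic open neighborhood of $x^p$ defined by the places $T$, the neighborhoods $\mathcal{W}_v$ for $v \in T$, and membership in $\calO_v^{\times}$ elsewhere (a genuine neighborhood of $x^p$, since $x_v \in \calO_v^{\times}$ implies $x_v^p \in \calO_v^{\times}$ for $v \notin T$). Because $x^p \in \overline{K^{\times}}$, there exists $b \in K^{\times} \cap \mathcal{W}$. Then $v(b) = 0$ for every $v \notin S \cup T$, so $b \in \calO_{S \cup T}^{\times} = B$; and $b \in \mathcal{W}_{v_0} \subseteq U_B \cdot (K_{v_0}^{\times})^p$; hence Lemma \ref{pthpowerfinindexsubgp} gives $b = a^p$ for some $a \in K^{\times}$. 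Finally $a \in \mathcal{U}$: at $v \notin S \cup T$ we have $p \cdot v(a) = v(b) = 0$, so $a \in \calO_v^{\times}$; and at $v \in T$ we have $a^p = b \in \mathcal{W}_v$, so the root-pullback property forces $a \in \mathcal{U}_v$. Thus $a \in K^{\times} \cap \mathcal{U}$, and since $\mathcal{U}$ was arbitrary, $x \in \overline{K^{\times}}$.

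The step I expect to be the main obstacle — and the only place where genuine arithmetic, rather than point-set topology, enters — is the passage from ``$b$ is $v_0$-adically close to a $p$-th power'' to ``$b$ is globally a $p$-th power''. A naive attempt would simply extract a $p$-th root of $x^p$ using that the $p$-th power map is a homeomorphism onto its image, but that is circular: the $p$-th root of $x^p$ is $x$ itself, whose membership in $\overline{K^{\times}}$ is exactly what we are trying to prove. What breaks the circularity is that, although we have no control over which element $b \in K^{\times}$ happens to approximate $x^p$, the very shape of the neighborhood $\mathcal{W}$ forces any such $b$ to be an $(S \cup T)$-unit, hence to lie in a single finitely generated subgroup of $K^{\times}$, to which Lemma \ref{pthpowerfinindexsubgp} (itself resting on the separability of $K_v/K$ and the closedness of $(K_v^{\times})^p$) applies. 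The remaining point to verify is the routine compatibility between the root-pullback shrinking of $\mathcal{W}_{v_0}$ and the extra inclusion $\mathcal{W}_{v_0} \subseteq U_B \cdot (K_{v_0}^{\times})^p$, but this is harmless, since both hold for any sufficiently small neighborhood of $x_{v_0}^p$.
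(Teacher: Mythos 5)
Your proof is correct and takes essentially the same approach as the paper's: in both cases the key point is that the element of $K^{\times}$ approximating $x^p$ is forced (by the shape of the neighborhood, or by the Cauchy property) to lie in a finitely generated group of $(S\cup T)$-units, so Lemma \ref{pthpowerfinindexsubgp} at a chosen place outside $S$ upgrades ``$v_0$-adically close to a $p$th power'' to ``a global $p$th power,'' and then the unique $p$th root is pulled back. The paper phrases this with a convergent sequence $\alpha_n \to x^p$ and its $p$th roots $\beta_n$, while you work directly with basic open neighborhoods and the root-pullback property of the local $p$th-power homeomorphism; this is a difference of presentation, not of substance.
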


\begin{proof}
Suppose that $x \in (\A^S)^{\times}$, and suppose that one has a sequence $\{\alpha_n\}_{n > 0}$ with $\alpha_n \in K^{\times}$ such that $\alpha_n \rightarrow x^p$. We must show that $x \in \overline{K^{\times}}$. First, because they form a Cauchy sequence, there is a finite set $S'$ of places of $K$ such that the $\alpha_n$ are all $S'$-units. It follows that they lie in a finitely generated subgroup $B \subset K^{\times}$. Choosing any place $v \notin S$ of $K$, Lemma \ref{pthpowerfinindexsubgp} then implies that $\alpha_n \in (K^{\times})^p$ for all $n$ sufficiently large, say $\alpha_n = \beta_n^p$. Then the sequence $\{\beta_n\}$ converges to a $p$th root of $x^p$, of which there is only one, so $x \in \overline{K^{\times}}$, as required.
\end{proof}

\begin{lemma}
\label{pthpowervalfg}
For any finite set $S$ of places of $K$, consider the map $f_S\colon K^{\times}/(K^{\times})^p \rightarrow \oplus_{v \notin S} \Z/p\Z$ which on the $v$ component is the valuation $v$ modulo $p$. Then the group $ker(f_S)$ is finite.
\end{lemma}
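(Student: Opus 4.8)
The plan is to recast $\ker(f_S)$ in terms of divisors and the Picard group of $K$, thereby reducing its finiteness to the classical finiteness of the class number of a global function field. Write $\F_q \subseteq K$ for the field of constants; since $q-1$ is prime to $p$ and $\F_q$ is perfect, $\F_q^{\times} = (\F_q^{\times})^p \subseteq (K^{\times})^p$. Let $\mathrm{Div}(K) = \bigoplus_v \Z$ be the divisor group (the sum over all places $v$ of $K$), let $\Div\colon K^{\times} \to \mathrm{Div}(K)$ be the principal divisor map, and let $\Pic(K) = \mathrm{Div}(K)/\Div(K^{\times})$, so that one has the usual exact sequence $1 \to \F_q^{\times} \to K^{\times} \xrightarrow{\Div} \mathrm{Div}(K) \to \Pic(K) \to 0$, the kernel of $\Div$ being the group of everywhere-regular, nowhere-vanishing functions on the smooth proper model of $K$, namely $\F_q^{\times}$.

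First I would observe that reduction modulo $p$ turns $\Div$ into a homomorphism $\overline{\Div}\colon K^{\times}/(K^{\times})^p \to \mathrm{Div}(K)/p\,\mathrm{Div}(K)$ with $\ker(\overline{\Div}) \cong \Pic(K)[p]$. Indeed, if $\Div(\alpha) \in p\,\mathrm{Div}(K)$, write $\Div(\alpha) = pD$; the divisor $D$ is unique since $\mathrm{Div}(K)$ is torsion-free, its class $[D]$ lies in $\Pic(K)[p]$, and $\alpha \mapsto [D]$ is a well-defined homomorphism $\ker(\overline{\Div}) \to \Pic(K)[p]$ which is surjective by the very definition of $p$-torsion and injective because $\Div(\alpha) = p\,\Div(\beta)$ forces $\alpha \in \F_q^{\times}(K^{\times})^p = (K^{\times})^p$. (Equivalently, this is read off from the multiplication-by-$p$ long exact sequences attached to $1 \to \F_q^{\times} \to K^{\times} \to \Div(K^{\times}) \to 1$ and $0 \to \Div(K^{\times}) \to \mathrm{Div}(K) \to \Pic(K) \to 0$, using $K^{\times}[p] = \mu_p(K) = 1$ in characteristic $p$, the absence of $p$-torsion in $\F_q^{\times}$ and in $\mathrm{Div}(K)$, and $\F_q^{\times}/(\F_q^{\times})^p = 1$.)

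Now $f_S$ is simply the composite of $\overline{\Div}$ with the projection $\pi_S\colon \mathrm{Div}(K)/p\,\mathrm{Div}(K) = \bigoplus_v \Z/p\Z \to \bigoplus_{v\notin S}\Z/p\Z$ dropping the coordinates $v\in S$. As $\ker(\pi_S) = \bigoplus_{v\in S}\Z/p\Z \cong (\Z/p\Z)^{|S|}$, the group $\ker(f_S) = \overline{\Div}^{-1}(\ker\pi_S)$ sits in an exact sequence
\[
0 \longrightarrow \Pic(K)[p] \longrightarrow \ker(f_S) \longrightarrow (\Z/p\Z)^{|S|} ,
\]
so $\ker(f_S)$ is finite as soon as $\Pic(K)[p]$ is. This is the only non-formal point, and it is classical: the degree map identifies $\Pic(K)[p]$ with $\Pic^0(K)[p]$ (since $\Z$ is torsion-free), and $\Pic^0(K)$ --- the degree-zero divisor class group of $K$, equivalently the group of $\F_q$-points of the Jacobian of the associated curve --- is finite. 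In particular nothing here is really ``hard''; the only subtlety is organizing the two exact sequences correctly, and the argument even yields $|\ker(f_S)| \le h_K\, p^{|S|}$ with $h_K := |\Pic^0(K)|$, which could be convenient for tracking constants elsewhere in the paper.
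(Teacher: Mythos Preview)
Your proof is correct and takes a somewhat different route from the paper's. The paper first enlarges $S$ so that $\Pic(X\smallsetminus S)=0$ (using finiteness of the class group), then observes that $\ker(f_S)$ coincides with the image of the $S$-unit group $\calO_S^{\times}$ in $K^{\times}/(K^{\times})^p$; since $\calO_S^{\times}$ is finitely generated (the function-field $S$-unit theorem) and the target is torsion, the image is finite. You instead keep $S$ fixed and work with the full divisor group of the curve, producing the exact sequence $0\to\Pic(K)[p]\to\ker(f_S)\to(\Z/p\Z)^{|S|}$ and invoking finiteness of $\Pic^0(K)$ directly. Both arguments ultimately rest on finiteness of the class group, but yours avoids the $S$-unit theorem and yields the explicit estimate $|\ker(f_S)|\le h_K\,p^{|S|}$, whereas the paper's version is a touch quicker to write down once one is willing to enlarge $S$.
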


\begin{proof}
We are free to enlarge $S$. In particular, letting $X$ denote the nonsingular projective curve of which $K$ is the function field, we may assume that $\Pic(X-S) = 0$. We claim that $\ker(f_S)$ equals the image in $K^{\times}/(K^{\times})^p$ of the group $\calO_S^{\times}$ of $S$-units, which is finitely generated. Because $\ker(f_S)$ is torsion, it will then follow that it is finite.

Let $\beta \in K^{\times}$ represent an element of $\ker(f_S)$. Because $\Pic(X - S) = 0$, for each $v \notin S$ there exists $\alpha_v \in K^{\times}$ such that $v(\alpha_v) = 1$ but $w(\alpha_v) = 0$ for all $w \notin S\cup\{v\}$. We then have $\beta(\prod_{v \notin S} \alpha_v^{n_v})^p \in \calO_S^{\times}$ for some integers $n_v$ all but finitely many of which vanish, which proves the claim.
\end{proof}

\begin{lemma}
\label{ppowerclosed}
The $p$th power map $[p]\colon (\A^S)^{\times}/\overline{K^{\times}} \rightarrow (\A^S)^{\times}/\overline{K^{\times}}$ is a homeomorphism onto a closed subgroup.
\end{lemma}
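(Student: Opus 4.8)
The plan is to exploit that $G := (\A^S)^{\times}/\overline{K^{\times}}$ is a Hausdorff, locally compact topological group possessing an open compact subgroup, and to prove that $[p]\colon G\to G$ is a \emph{proper} map; since a proper continuous injection between locally compact Hausdorff spaces is automatically a homeomorphism onto a closed subspace (proper maps between locally compact Hausdorff spaces are closed, and a closed continuous injection is an embedding with closed image), this is exactly what is required. Injectivity of $[p]$ is precisely Lemma \ref{cokerptorsfree}, and continuity is automatic. Local compactness of $G$ holds because $(\A^{S})^{\times}$ is a restricted product of the locally compact groups $K_v^{\times}$ with respect to the compact open subgroups $\calO_v^{\times}$, and a quotient of a locally compact group by a closed subgroup is again locally compact (and Hausdorff).

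First I would set up the open compact subgroup. Let $U := \prod_{v\notin S}\calO_v^{\times}\subset(\A^{S})^{\times}$, which is open (by the definition of the restricted-product topology) and compact (Tychonoff), and let $\overline{U}$ be its image under the open quotient map $\pi\colon(\A^{S})^{\times}\to G$, an open compact subgroup of $G$. Because $K^{\times}\subset\overline{K^{\times}}$ and $UK^{\times}$ is an open — hence closed — subgroup containing $K^{\times}$, one checks $\overline{U}=\pi(UK^{\times})$ and $U\overline{K^{\times}}=UK^{\times}$, so that $G/\overline{U}$ is identified with the discrete group $D:=\coker\!\big(\ord\colon K^{\times}\to\bigoplus_{v\notin S}\Z\big)$, where $\ord(\alpha):=(\ord_v(\alpha))_{v\notin S}$. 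Next I would reduce properness of $[p]$ to the compactness of $[p]^{-1}(\overline{U})$: this preimage is an open subgroup containing $\overline{U}$ with $[p]^{-1}(\overline{U})/\overline{U}\cong D[p]$, so if $D[p]$ is finite then $[p]^{-1}(\overline{U})$ is a finite union of cosets of the compact group $\overline{U}$, hence compact; and then for any compact $Q\subset G$, covering $Q$ by finitely many cosets of the open subgroup $\overline{U}$ and observing that the preimage of each coset is either empty or a translate of $[p]^{-1}(\overline{U})$ shows $[p]^{-1}(Q)$ — a closed subset of a finite union of compacta — is compact.

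Thus everything comes down to proving that $D[p]$ is finite, and this is the one step with genuine content: it rests on Lemma \ref{pthpowervalfg}. I would argue as follows. Fix a finite set $R\subset K^{\times}$ of representatives for the finite group $\ker(f_S)\subset K^{\times}/(K^{\times})^{p}$. If $n=(n_v)\in\bigoplus_{v\notin S}\Z$ represents a class of $D[p]$, then $pn=\ord(\alpha)$ for some $\alpha\in K^{\times}$, so $\ord_v(\alpha)\equiv 0\pmod p$ for all $v\notin S$; that is, the class of $\alpha$ in $K^{\times}/(K^{\times})^{p}$ lies in $\ker(f_S)$. Writing $\alpha=\gamma\delta^{p}$ with $\gamma\in R$ and $\delta\in K^{\times}$ and applying $\ord$ gives $n=\tfrac1p\ord(\gamma)+\ord(\delta)$, where $\tfrac1p\ord(\gamma)\in\bigoplus_{v\notin S}\Z$ because $\gamma\in\ker(f_S)$. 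Hence the class of $n$ in $D$ equals that of $\tfrac1p\ord(\gamma)$, and as $\gamma$ ranges over the finite set $R$ this produces only finitely many classes, so $D[p]$ is finite, completing the proof.

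The topological bookkeeping — the reduction to an open compact subgroup and the properness argument — is entirely routine; the crux, and essentially the only nonformal ingredient, is the finiteness of $D[p]$, which is a direct consequence of Lemma \ref{pthpowervalfg} (ultimately of the finite generation of $S$-unit groups of global function fields).
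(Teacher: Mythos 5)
Your proof is correct, and it reaches the conclusion by a genuinely different topological route than the paper, while resting on the same two arithmetic inputs: Lemma \ref{cokerptorsfree} for injectivity and Lemma \ref{pthpowervalfg} for the key finiteness. The paper argues directly that the image $((\A^S)^{\times})^p\overline{K^{\times}}$ is closed: given a convergent sequence $y_n^p\alpha_n \to x$, the Cauchy condition forces the valuations of the ratios $\alpha_{n+m}/\alpha_n$ to be divisible by $p$ outside $S$, so Lemma \ref{pthpowervalfg} lets one pass to a subsequence with $\alpha_n = \alpha\beta_n^p$ and reduce to the closedness of $((\A^S)^{\times})^p$; it then invokes the Bourbaki corollary that a continuous injective homomorphism of locally compact groups with closed image is automatically a homeomorphism onto that image. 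You instead prove that $[p]$ is proper and conclude by elementary point-set topology (a proper continuous injection between locally compact Hausdorff spaces is a closed embedding), reducing properness to the finiteness of $D[p]$, where $D = \coker\bigl(\ord\colon K^{\times} \to \bigoplus_{v\notin S}\Z\bigr)$ is the discrete quotient of $G$ by the open compact subgroup $\overline{U}$; that finiteness is again precisely Lemma \ref{pthpowervalfg}, applied in essentially the same way the paper applies it to the ratios $\alpha_{n+m}/\alpha_n$. Your supporting identifications are all sound: $U\overline{K^{\times}} = UK^{\times}$ because the open subgroup $UK^{\times}$ is closed and hence contains $\overline{K^{\times}}$; $[p]^{-1}(\overline{U})/\overline{U} \cong D[p]$; and the coset-covering argument upgrading compactness of $[p]^{-1}(\overline{U})$ to properness is standard. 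What your route buys is a sequence-free argument that avoids the open-mapping-type input from Bourbaki and makes visible the structural fact that $G$ is compact-open-by-discrete, with all the arithmetic concentrated in the discrete quotient $D$; what the paper's route buys is that it works entirely upstairs in $(\A^S)^{\times}$ without needing to identify $G/\overline{U}$ with a divisor class group.
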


\begin{proof}
By Lemma \ref{cokerptorsfree}, the map is an isomorphism onto its image. Due to \cite[Ch.\,IX,\S5, Prop.\,6, Cor.\,]{bourbakitopology},
 in order to show that it is a homeomorphism onto a closed subgroup, it suffices to prove that $[p]$ has closed image. That is, we must show that $((\A^S)^{\times})^p\overline{K^{\times}} \subset (\A^S)^{\times}$ is closed. Let $x \in (\A^S)^{\times}$ be in the closure, so there is a sequence $\{y_n^p\alpha_n\}_{n > 0}$ converging to $x$, with $y_n \in (\A^S)^{\times}$ and $\alpha_n \in K^{\times}$. Because the sequence is Cauchy, one deduces that there is some $n$ such that, for all $m > 0$, the  ratios $\alpha_{n+m}/\alpha_n$ lie in $\ker(f_S)$ in the notation of Lemma \ref{pthpowervalfg}. That lemma then implies that these ratios lie in a finite set modulo $(K^{\times})^p$. Passing to a subsequence, therefore, we may assume that there exist $\alpha, \beta_n \in K^{\times}$ such that $\alpha_n = \alpha(\beta_n)^p$ for all $n$. We are free to modify $x$ by an element of $K^{\times}$. Replacing $x$ by $\alpha^{-1}x$, we then obtain that $x \in \overline{((\A^S)^{\times})^p}$. But because $(K_v^{\times})^p \subset K_v^{\times}$ is closed for all $v$, the group $((\A^S)^{\times})^p \subset (\A^S)^{\times}$ is closed, so the proof is complete.
\end{proof}

\begin{lemma}
\label{groupcontainingothers}
For any integer $n \geq 0$ and any prime number $p$, there is an abelian group of order $p^{\lfloor n(1+{\rm{log}}(n))\rfloor}$ containing (an isomorphic copy of) every abelian $p$-group of order $\leq p^n$.
\end{lemma}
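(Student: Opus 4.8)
The plan is to write down an explicit abelian $p$-group $G$ with the required containment property and then bound its order. The underlying observation is that a finite abelian $p$-group $A$ of order $\leq p^n$ is highly constrained once written in invariant-factor form $A \simeq \bigoplus_{i=1}^{s}\Z/p^{b_i}\Z$ with $b_1 \geq b_2 \geq \cdots \geq b_s \geq 1$: since the exponents are weakly decreasing and $\sum_i b_i \leq n$, one gets $i\cdot b_i \leq b_1 + \cdots + b_i \leq n$, hence $b_i \leq \lfloor n/i\rfloor$ for every $i$; in particular $s \leq n$. These bounds are essentially forced (taking $b_1 = \cdots = b_i = \lfloor n/i\rfloor$ realizes $b_i = \lfloor n/i\rfloor$), which is what dictates the construction.

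Accordingly, I would set $G := \bigoplus_{i=1}^{n}\Z/p^{\lfloor n/i\rfloor}\Z$. Given $A$ as above, for each $i \leq s$ the inequality $b_i \leq \lfloor n/i\rfloor$ yields an injection $\Z/p^{b_i}\Z \hookrightarrow \Z/p^{\lfloor n/i\rfloor}\Z$ (sending a generator to $p^{\lfloor n/i\rfloor - b_i}$, i.e.\ onto the unique subgroup of that order), and since $s \leq n$ these assemble into an embedding $A \hookrightarrow G$; thus $G$ contains an isomorphic copy of every abelian $p$-group of order $\leq p^n$. To control the order, note that $\log_p |G| = \sum_{i=1}^{n}\lfloor n/i\rfloor \leq n\sum_{i=1}^{n}\frac{1}{i} \leq n\bigl(1 + \int_{1}^{n}\tfrac{dx}{x}\bigr) = n(1 + \log n)$, and since the left-hand side is an integer it is $\leq \lfloor n(1+\log n)\rfloor =: M$. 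Replacing $G$ by $G \oplus (\Z/p\Z)^{M - \log_p|G|}$ — which still contains everything $G$ does — produces an abelian group of order exactly $p^{M}$ with the desired property. The degenerate cases $n = 0, 1$ are checked directly.

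There is no real obstacle here: the content of the lemma lies entirely in choosing the right $G$, after which everything is elementary. The only steps warranting a word of justification are the (trivial) fact that componentwise domination of invariant factors produces an embedding of abelian $p$-groups, the inequality $i b_i \leq n$, and the classical harmonic-sum estimate $\sum_{i\leq n} 1/i \leq 1 + \log n$.
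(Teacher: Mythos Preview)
Your proof is correct and follows essentially the same approach as the paper: both construct the identical group $G = \bigoplus_{i=1}^{n}\Z/p^{\lfloor n/i\rfloor}\Z$ and bound its order via the harmonic-sum estimate. Your verification of the embedding via invariant factors and the inequality $i\,b_i \leq n$ is slightly more direct than the paper's (which invokes the general criterion that $C \hookrightarrow C'$ iff $\sum_{j\geq m} f_j(C) \leq \sum_{j\geq m} f_j(C')$ for all $m$), and you are more careful than the paper in padding by copies of $\Z/p\Z$ to hit the stated order exactly rather than merely bounding it.
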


In the above lemma, we interpret $0{\rm{log}}(0)$ to be $0$. Similar conventions apply below.

\begin{proof}
We claim that the group $A := \oplus_{i=1}^n (\Z/p^{\lfloor n/i \rfloor}\Z)$ has the desired property. To see this, let $B = \sum_{j=1}^n (\Z/p^j\Z)^{e_j}$ be an abelian group of order $\leq p^n$, so
\begin{equation}
\label{groupcontainingotherspfeqn1}
\sum_{j=1}^n je_j \leq n.
\end{equation}
For an abelian $p$-group $C$, let $f_m(C)$ denote the number of summands of $C$ isomorphic to $\Z/p^m\Z$. Then the condition for $C$ to be isomorphic to a subgroup of $C'$ is that, for every integer $m > 0$,
\[
\sum_{j \geq m} f_j(C) \leq \sum_{j \geq m} f_j(C').
\]
Thus, in order to show that $A$ has the desired property, we must show that -- under the assumption (\ref{groupcontainingotherspfeqn1}) -- one has for all $m > 0$,
\begin{equation}
\label{groupcontainingotherspfeqn2}
\sum_{j \geq m} e_j \leq \sum_{i\colon\lfloor n/i\rfloor \geq m} 1,
\end{equation}
or equivalently,
\[
\sum_{j \geq m} e_j \leq n/m.
\]
But we have -- using (\ref{groupcontainingotherspfeqn1}) --
\[
n \geq \sum_{j \geq m} je_j \geq m\sum_{j \geq m} e_j,
\]
which yields (\ref{groupcontainingotherspfeqn2}).

It now only remains to show that $A$ has order bounded by $p^{\lfloor n(1+{\rm{log}}(n))\rfloor}$. We have
\[
{\rm{log}}_p(\#A) = \sum_{i=1}^n\lfloor n/i\rfloor \leq n\sum_{i=1}^n 1/i \leq n(1+{\rm{log}}(n)).
\]
\end{proof}

\begin{lemma}
\label{subgpofboundedindex}
Let $p$ be a prime number, $n > 0$ an integer, and for $1 \leq i \leq m$, let $A_i$ be a finite abelian $p$-group. Let $D \subset A := \prod_{i=1}^r A_i$ be a subgroup of index $\leq p^m$. For each $i$, let $D_i := D \cap A_i$, where we regard $A_i$ as a subgroup of $A$ via the map which is the identity on the $i$ factor and $0$ on the other factors.
\begin{itemize}
\item[(i)] Suppose that $\#D_i \leq p^n$ for all $i$. Then there is a subgroup $E \subset D$ with $[D: E] \leq p^{\lfloor (n+m)(1+{\rm{log}}(n+m))\rfloor}$ such that $E \cap D_i = 0$ for all $1 \leq i \leq r$.
\item[(ii)] If $m = 0$ $($so $D = A$$)$, and $H$ is a finite abelian group admitting an inclusion from each $A_i$, then there is a subgroup $E \subset A$ such that there is an embedding $E/A \hookrightarrow H$ and such that $E \cap A_i = 0$ for all $1 \leq i \leq r$.
\end{itemize}
\end{lemma}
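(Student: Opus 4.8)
The plan is to handle both parts with a single device: fix a finite abelian $p$-group $G$, embed each coordinate subgroup $A_i$ into $G$, and take $E$ to be (the intersection with $D$ of) the kernel of the associated ``diagonal'' homomorphism. Precisely, given embeddings $\alpha_i \colon A_i \hookrightarrow G$, form $\phi \colon A = \prod_{i=1}^r A_i \to G$ by $\phi\bigl((x_i)_i\bigr) := \sum_{i=1}^r \alpha_i(x_i)$. The restriction of $\phi$ to the $i$-th coordinate copy of $A_i$ inside $A$ equals $\alpha_i$, which is injective, so $\ker \phi \cap A_i = 0$ for every $i$. For part (ii), where $D = A$, the hypothesis that each $A_i$ embeds into $H$ supplies the $\alpha_i$ with $G = H$, and I would simply set $E := \ker \phi$: then $E \cap A_i = 0$, and $A/E \cong \im \phi \subseteq H$ is the desired embedding $A/E \hookrightarrow H$ (I read the ``$E/A \hookrightarrow H$'' in the statement as a typo for ``$A/E \hookrightarrow H$''). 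No quantitative input is needed in this case since $m = 0$.

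For part (i) the one genuinely nonformal point is the bound $\# A_i \le p^{n+m}$ for every $i$: the composite $A_i \hookrightarrow A \twoheadrightarrow A/D$ has kernel $A_i \cap D = D_i$, so $A_i/D_i$ embeds into $A/D$; hence $\#(A_i/D_i) \le [A:D] \le p^m$, and combined with $\#D_i \le p^n$ this gives $\#A_i \le p^{n+m}$. Granting this, I would apply Lemma \ref{groupcontainingothers} with $n+m$ in place of $n$ to obtain an abelian $p$-group $G$ of order $\le p^{\lfloor (n+m)(1 + \log(n+m)) \rfloor}$ into which every abelian $p$-group of order $\le p^{n+m}$ -- in particular each $A_i$ -- embeds. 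Running the diagonal construction above and putting $E := D \cap \ker \phi \subseteq D$, one gets $E \cap D_i \subseteq \ker \phi \cap A_i = 0$, while $D/E = D/(D \cap \ker \phi) \cong (D + \ker \phi)/\ker \phi$ is a subgroup of $A/\ker \phi \cong \im \phi \subseteq G$, so that $[D:E] \le \#G \le p^{\lfloor (n+m)(1+\log(n+m))\rfloor}$, as required.

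I do not expect a serious obstacle: the construction is the standard ``generic complement via a diagonal map,'' and all of the combinatorics has already been packaged into Lemma \ref{groupcontainingothers}. The step deserving a moment's care is the bound $\#A_i \le p^{n+m}$, since it is precisely what allows the parameter fed to Lemma \ref{groupcontainingothers} to depend only on $n$ and $m$, and not on $r$ or $\#A$; the only remaining verifications are degenerate cases such as some $A_i$ being trivial or $n+m$ small, which are immediate given that $n > 0$ and the convention $0\log 0 = 0$.
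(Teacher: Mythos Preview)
Your proposal is correct and follows essentially the same approach as the paper: the paper likewise bounds $\#A_i \le p^{n+m}$ via the inclusion $A_i/D_i \hookrightarrow A/D$, invokes Lemma~\ref{groupcontainingothers} to obtain a universal receptacle $H$, builds the diagonal homomorphism $\phi\colon A \to H$ from the coordinate embeddings, and takes $E := \ker(\phi|_D)$. Your reading of ``$E/A \hookrightarrow H$'' as a typo for ``$A/E \hookrightarrow H$'' is also correct.
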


The key point of the above lemma, from our perspective, is that the index $[D: E]$ may be bounded solely in terms of the orders $\#D_i$ and the index $[A: D]$, and independently of the number $r$ of factors in $A$.

\begin{proof}
In case (i), one has for each $i$ an inclusion $A_i/D_i \hookrightarrow A/D$, hence $$\#A_i = [A_i: D_i]\cdot \#D_i \leq [A: D]\cdot \#D_i \leq p^{n+m}.$$ Let $H$ be an abelian group admitting for each $i$ an inclusion $\phi_i\colon A_i \hookrightarrow H$ (taken to be the same $H$ as in (ii) in case (ii)). By Lemma \ref{groupcontainingothers}, we may in case (i) choose $H$ so that $\#H = p^{\lfloor (n+m)(1+{\rm{log}}(n+m)) \rfloor}$. Let $\phi\colon A \rightarrow H$ be the homomorphism which is $\phi_i$ on $A_i$, and let $E := \ker(\phi|_D)$. Then $E \cap D_i = 0$ for all $i$ because $\phi_i$ is an inclusion, which proves (ii), while $D/E$ is a subgroup of $H$, hence is a $p$-group of order $\leq p^{(n+m)(1+{\rm{log}}(n+m))}$, which proves (i).
\end{proof}

\begin{lemma}
\label{extendingchars}
Suppose given a short exact sequence of locally compact Hausdorff abelian groups
\[
0 \longrightarrow G' \xlongrightarrow{i} G \xlongrightarrow{\pi} G'' \longrightarrow 0
\]
such that $i$ is a topological isomorphism onto a closed subgroup of $G$, and $\pi$ induces an isomorphism $G/i(G') \simeq G''$. Suppose that the multiplication by $p$ map $[p]_{G''}\colon G'' \rightarrow G''$ is a topological isomorphism onto a closed subgroup of $G''$. Then any continuous homomorphism $\chi\colon G' \rightarrow H$ into a finite discrete abelian $p$-group extends to a continuous homomorphism $G \rightarrow H$.
\end{lemma}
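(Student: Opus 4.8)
The plan is to prove the Pontryagin-dual statement. Write $\widehat{A} = \Hom_{\mathrm{cont}}(A, \RR/\Z)$ for the Pontryagin dual of a locally compact Hausdorff abelian group $A$. Since $i$ is a topological isomorphism onto a closed subgroup and $\pi$ is the corresponding quotient map, the given sequence is strict exact, and the standard duality for closed subgroups and quotients shows that the dual sequence $0 \to \widehat{G''} \xrightarrow{\widehat{\pi}} \widehat{G} \xrightarrow{\widehat{i}} \widehat{G'} \to 0$ is again strict exact; in particular $\widehat{i}$ is surjective, and $\widehat{\pi}$ is injective with image $\ker(\widehat{i})$ (the last being the annihilator of $i(G')$ in $\widehat{G}$). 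Dualizing $\chi$ produces a continuous homomorphism $\widehat{\chi}\colon \widehat{H} \to \widehat{G'}$, where $\widehat{H}$ is a finite discrete abelian $p$-group (isomorphic to $H$). By double duality, to extend $\chi$ along $i$ it suffices to lift $\widehat{\chi}$ along the surjection $\widehat{i}$ to a continuous homomorphism $g\colon \widehat{H} \to \widehat{G}$ with $\widehat{i}\circ g = \widehat{\chi}$; then $\tilde{\chi} := \widehat{g}\colon G \to H$ (using $\widehat{\widehat{G}} = G$, $\widehat{\widehat{H}} = H$) satisfies $\tilde{\chi}\circ i = \chi$.

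Next I would extract the role of the hypothesis on $[p]_{G''}$. As Pontryagin duality is additive, the dual of $[p]_{G''}$ is $[p]_{\widehat{G''}}$; and since $[p]_{G''}$ is a topological isomorphism onto a closed subgroup, its dual is surjective, so $[p]_{\widehat{G''}}$ — and hence $[p^k]_{\widehat{G''}} = ([p]_{\widehat{G''}})^{k}$ for every $k \geq 0$ — is surjective. Transporting through the group isomorphism $\widehat{\pi}\colon \widehat{G''}\xrightarrow{\sim}\ker(\widehat{i})$, multiplication by $p^k$ is also surjective on the subgroup $\ker(\widehat{i}) \subseteq \widehat{G}$.

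The lifting is then straightforward. Choose a cyclic decomposition $\widehat{H} = \bigoplus_{j=1}^{r}\langle e_j\rangle$ with $e_j$ of order $p^{n_j}$, and put $y_j := \widehat{\chi}(e_j)$, so $p^{n_j}y_j = 0$. Using surjectivity of $\widehat{i}$, choose $x_0 \in \widehat{G}$ with $\widehat{i}(x_0) = y_j$; then $p^{n_j}x_0 \in \ker(\widehat{i})$, so by the previous step $p^{n_j}x_0 = p^{n_j}x_1$ for some $x_1 \in \ker(\widehat{i})$, and $x_j := x_0 - x_1$ has $\widehat{i}(x_j) = y_j$ and $p^{n_j}x_j = 0$. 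The assignment $e_j \mapsto x_j$ then defines a homomorphism $g\colon \widehat{H}\to\widehat{G}$ (well-defined since the defining relations $p^{n_j}e_j = 0$ are respected, and continuous because $\widehat{H}$ is discrete) with $\widehat{i}\circ g = \widehat{\chi}$, and dualizing it gives the desired $\tilde\chi$.

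The one point requiring care — and the step I would be most cautious about — is the invocation of the standard structural facts of Pontryagin duality on locally compact Hausdorff abelian groups: that it carries strict short exact sequences to strict short exact sequences, that the dual of a topological isomorphism onto a closed subgroup is a (continuous open) surjection, and that it is an additive anti-equivalence intertwining the maps $[p]$. Granting these, the essential content of the lemma is simply that the hypothesis on $[p]_{G''}$ dualizes to surjectivity of $[p]$ on $\widehat{G''}$, which is exactly what lets one correct an arbitrary $\widehat{i}$-preimage so as to have the right $p$-power order; the assumption that $H$ is a $p$-group is used precisely here, and the conclusion can fail without it.
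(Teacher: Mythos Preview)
Your proof is correct and uses essentially the same idea as the paper: both hinge on Pontryagin duality and the observation that the hypothesis on $[p]_{G''}$ translates into surjectivity of multiplication by $p$ on $\widehat{G''}$, which is precisely what allows one to correct an arbitrary extension (or lift) so that it has the right $p$-power order. The paper's version is slightly more direct---it first reduces to the case $H = p^{-r}\Z/\Z \subset \RR/\Z$, views $\chi$ as a single element of $\widehat{G'}$, extends it to $\psi \in \widehat{G}$, and then corrects $\psi$ by subtracting $\zeta\circ\pi$ where $\zeta$ is a $p^r$th ``root'' of $\pi_*(p^r\psi)$ in $\widehat{G''}$---whereas you dualize the entire problem and work with a cyclic decomposition of $\widehat{H}$; but this is a cosmetic difference in packaging rather than a substantive one.
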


\begin{proof}
We may assume that $H = p^{-r}\Z/\Z \subset \R/\Z$. Then the character $\chi \in \widehat{G'}$ extends to a character $\psi \in \widehat{G}$, and $p^r\psi = \phi\circ\pi$ for some $\phi \in \widehat{G''}$. Because $[p^r]_{G''}$ is a topological isomorphism onto a closed subgroup, the character $\phi$ extends along $[p^r]_{G''}$. That is, there exists $\zeta \in \widehat{G''}$ such that $\zeta \circ [p^r]_{G''} = \phi$. Then $p^r(\psi - \zeta\circ\pi) = 0$, so $\psi-\zeta\circ\pi \in \widehat{G}$ is an extension of $\chi$ that lands in $p^{-r}\Z/\Z$.
\end{proof}

\begin{proposition}
\label{effapproxadelicversion}
Let $S$ be a finite set of places of $K$, let $C \subset \A^{\times}/K^{\times}$ be a closed subgroup of index $p^m$, and suppose given for every $v \in S$ a closed subgroup $G_v \subset C \cap K_v^{\times}$ of $p$-power index $\leq p^n$.
\begin{itemize}
\item[(i)] There is a closed subgroup $B \subset C$ with $[C: B] \mid p^{\lfloor(n+m)(1+{\rm{log}}(n+m))\rfloor}$ such that $B \cap K_v^{\times} = G_v$ for every $v \in S$.
\item[(ii)] If $m = 0$ $($so $C = \A^{\times}/K^{\times}$$)$, and $H$ is a finite abelian group admitting an inclusion from $K_v^{\times}/G_v$ for all $v \in S$, then there exist a closed subgroup $B \subset \A^{\times}/K^{\times}$ such that there is an embedding $(\A^{\times}/K^{\times})/B \hookrightarrow H$ and such that $B \cap K_v^{\times} = G_v$ for every $v \in S$.
\end{itemize}
\end{proposition}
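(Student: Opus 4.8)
The plan is to dualize the problem into one of extending characters of the idele class group. First I would reduce to the case $C = \A^{\times}/K^{\times}$, i.e.\ $m = 0$. Since $[K_v^{\times} : G_v] = [K_v^{\times} : C \cap K_v^{\times}] \cdot [C \cap K_v^{\times} : G_v]$ divides $p^m \cdot p^n = p^{n+m}$, once the case $m = 0$ is known I may apply it to $\A^{\times}/K^{\times}$ with the subgroups $G_v \subset K_v^{\times}$, which have $p$-power index $\leq p^{n+m}$, obtaining a closed $B_0 \subset \A^{\times}/K^{\times}$ with $[\A^{\times}/K^{\times} : B_0] \mid p^{\lfloor (n+m)(1+\log(n+m)) \rfloor}$ and $B_0 \cap K_v^{\times} = G_v$ for all $v \in S$; then $B := B_0 \cap C$ settles (i), since $[C : B]$ divides $[\A^{\times}/K^{\times} : B_0]$ and $B \cap K_v^{\times} = (B_0 \cap K_v^{\times}) \cap C = G_v$ (the last equality using $G_v \subset C$). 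Part (ii) is already the case $m = 0$, so needs no reduction.

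\textbf{Constructing the character.} Assume now $C = \A^{\times}/K^{\times}$. Let $q \colon \A^{\times}/K^{\times} \to (\A^S)^{\times}/\overline{K^{\times}}$ be the natural open surjection; its kernel $G'$ is closed, and equals the closure of the image of $\prod_{v \in S} K_v^{\times}$ (which injects into $\A^{\times}/K^{\times}$, because a global function equal to $1$ in $K_w$ for every $w \notin S$ is $1$). Each $G_v$, being closed of finite index in $K_v^{\times}$, is open, so the product of the quotient maps is a continuous surjection $\psi_0 \colon \prod_{v \in S} K_v^{\times} \to A := \prod_{v \in S} K_v^{\times}/G_v$ with open kernel $\prod_{v \in S} G_v$. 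Granting — see below — that $\psi_0$ remains continuous for the topology induced from $\A^{\times}/K^{\times}$, it extends continuously to the closure $G'$. Applying Lemma \ref{extendingchars} to the exact sequence $0 \to G' \to \A^{\times}/K^{\times} \xrightarrow{q} (\A^S)^{\times}/\overline{K^{\times}} \to 0$ — whose last term satisfies the required hypothesis by Lemma \ref{ppowerclosed} — extends this further to a continuous homomorphism $\Phi \colon \A^{\times}/K^{\times} \to A$. By construction $\Phi|_{K_v^{\times}}$ is the quotient $K_v^{\times} \to K_v^{\times}/G_v$ composed with the inclusion of the $v$-th factor; hence $\Phi$ is surjective, $\ker \Phi \cap K_v^{\times} = G_v$, and $[\A^{\times}/K^{\times} : \ker \Phi] = \#A$.

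\textbf{Controlling the index.} Since $\#A$ may be as large as $p^{(n+m)|S|}$, I would cut it down by Lemma \ref{subgpofboundedindex}: writing $A = \prod_{v \in S} A_v$ with $A_v := K_v^{\times}/G_v$ of order $\leq p^{n+m}$, part (i) of that lemma, applied with $D = A$ (index $1$), produces $E \subset A$ with $E \cap A_v = 0$ for all $v$ and $[A : E] \leq p^{\lfloor (n+m)(1+\log(n+m)) \rfloor}$; then $B := \Phi^{-1}(E)$ is closed with $[\A^{\times}/K^{\times} : B] = [A : E]$, and $B \cap K_v^{\times} = \{x \in K_v^{\times} : \Phi(x) \in E \cap A_v = 0\} = \ker(\Phi|_{K_v^{\times}}) = G_v$. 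For (ii), given the finite abelian $H$ with inclusions $K_v^{\times}/G_v \hookrightarrow H$, one instead invokes part (ii) of Lemma \ref{subgpofboundedindex} to get $E \subset A$ with $E \cap A_v = 0$ and an embedding $A/E \hookrightarrow H$; then $B := \Phi^{-1}(E)$ also satisfies $(\A^{\times}/K^{\times})/B \cong A/E \hookrightarrow H$.

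\textbf{The main obstacle.} The crux is the continuity of $\psi_0$ for the topology induced from $\A^{\times}/K^{\times}$, equivalently that the image of $\prod_{v \in S} G_v$ is closed, hence open, in the image of $\prod_{v \in S} K_v^{\times}$. Unwinding this, one must show that a sequence of $S$-units tending to $1$ at every place outside $S$ eventually lies in $G_v$ for each $v \in S$. This is precisely where the hypothesis that $[K_v^{\times} : G_v]$ is a power of $p$ is essential — in general the statement fails, which is the source of the classical Grunwald--Wang exceptions — and the relevant inputs are the $p$-th power lemmas \ref{pthpowerfinindexsubgp} and \ref{pthpowervalfg} together with the torsion-freeness, in characteristic $p$, of the pro-$p$ group of principal units $1 + \mathfrak{m}_v$: an $S$-unit close to $1$ outside $S$ is forced to have $p$-adically large multiplicities in its expression in terms of fundamental $S$-units, hence to become a high $p$-th power, and therefore to lie in each $G_v$ (which, having $p$-power index, contains a fixed power of $K_v^{\times}$).
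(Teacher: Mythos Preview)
Your proof is correct and uses the same toolkit as the paper (Lemmas \ref{pthpowerfinindexsubgp}, \ref{ppowerclosed}, \ref{subgpofboundedindex}, \ref{extendingchars}), but the organization differs in two useful ways. First, you reduce part (i) to the case $m=0$ by intersecting with $C$ at the end; the paper instead carries a general $C$ throughout, which forces it to verify that the subgroup $R \subset Q = (\A^S)^\times/\overline{K^\times}$ is closed (a nontrivial paragraph) before it can invoke Lemma \ref{extendingchars}. Your reduction sidesteps this entirely, since for $C = \A^\times/K^\times$ one applies Lemma \ref{extendingchars} directly to the sequence with quotient $Q$. Second, you extend $\psi_0$ to a character $\Phi$ landing in the full product $A = \prod A_v$ and only \emph{then} cut down via Lemma \ref{subgpofboundedindex}; the paper cuts down first inside $\prod_{v\in S} K_v^\times$ to obtain $B'$, and only afterwards extends. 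Both orders work, and both ultimately rest on the same delicate point: that $\prod_{v\in S} G_v$ (equivalently, the paper's $B'$) is closed in $\prod_{v\in S} K_v^\times$ for the topology induced from $\A^\times/K^\times$. You isolate this as your ``main obstacle'' and sketch the argument via Lemma \ref{pthpowerfinindexsubgp} (an $S$-unit converging to $1$ in $(\A^S)^\times$ is eventually an arbitrarily high $p$-th power in $K^\times$, hence lands in each $G_v$); the paper's corresponding step is the assertion that $(C\cap\prod K_v^\times)/B' \to \overline{(C\cap\prod K_v^\times)}/\overline{B'}$ is an isomorphism, together with $\overline{B'}\cap K_v^\times = B'\cap K_v^\times$, both justified tersely by ``$B'$ is closed of finite index.'' Your sketch of why this holds is essentially right, though the phrase ``$p$-adically large multiplicities in terms of fundamental $S$-units'' is a bit imprecise --- the clean statement is that iterating Lemma \ref{pthpowerfinindexsubgp} (and using that $p$-th roots are unique in characteristic $p$) shows such a sequence eventually lies in $(K^\times)^{p^N}$ for any fixed $N$.
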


\begin{proof}
For each $v \in S$, let $A_v := K_v^{\times}/G_v$. Let $A := \prod_{v \in S} A_v$, and let $D \subset A$ be the image in $A$ of $C \cap (\prod_{v \in S} K_v^{\times})$. Applying Lemma \ref{subgpofboundedindex}, we obtain a subgroup $E \subset D$ of index dividing $p^{\lfloor(n+m)(1+{\rm{log}}(n+m))\rfloor}$ in case (i), and such that there is an inclusion $D/E \hookrightarrow H$ in case (ii), such that $E \cap D_v = 0$ for all $v \in S$. Let $B'$ be the preimage in $C\cap (\prod_{v \in S} K_v^{\times})$ of $E$. Then $B'$ is closed in $C\cap (\prod_{v \in S} K_v^{\times})$,
\begin{equation}
\label{effapproxadelicversionpfeqn1}
B' \cap K_v^{\times} = G_v
\end{equation}
for every $v \in S$, and
\[
\left[C\cap (\prod_{v \in S} K_v^{\times}): B'\right] \bigg| \hspace{.03 in} p^{\lfloor(n+m)(1+{\rm{log}}(n+m))\rfloor}.
\]

Now consider the following commutative diagram of exact sequences of locally compact Hausdorff abelian groups, where $R$ and $Q$ are defined to be the quotient groups making the sequences exact, and where the right vertical arrow is an inclusion:
\[
\begin{tikzcd}
0 \arrow{r} & C \cap \overline{\prod_{v \in S} K_v^{\times}} \arrow{r} \arrow{d} & C \arrow{r}{\pi} \arrow{d} & R \arrow{r} \arrow[d, hookrightarrow] & 0 \\
0 \arrow{r} & \overline{\prod_{v \in S} K_v^{\times}} \arrow{r} & \A^{\times}/K^{\times} \arrow{r} & Q \arrow{r} & 0
\end{tikzcd}
\]
The group $Q$ is the maximal Hausdorff quotient of $\A^{\times}/K^{\times}\prod_{v \in S}K_v^{\times} = (\A^S)^{\times}/K^{\times}$, which is $(\A^S)^{\times}/\overline{K^{\times}}$, where now the closure is in $(\A^S)^{\times}$. It follows from Lemma \ref{ppowerclosed} that the multiplication by $p$ map $[p]_Q\colon Q \rightarrow Q$ induces a topological isomorphism onto a closed subgroup. We claim that the same statement holds with $Q$ replaced by $R$: The multiplication by $p$ map $[p]_R\colon R \rightarrow R$ induces a topological isomorphism onto a closed subgroup. Indeed, from the corresponding statement for $Q$, it suffices to check that the inclusion $R \hookrightarrow Q$ is a homeomorphism onto a closed subgroup. By \cite[Ch.\,IX,\S5, Prop.\,6, Cor.\,]{bourbakitopology}, it suffices merely to verify that the image is closed.

For the required closedness, we must check that $\A^{\times}/K^{\times}C\overline{\prod_{v \in S}K_v^{\times}}$ is Hausdorff. As observed above, $Q = (\A^S)^{\times}/\overline{K^{\times}}$, so the question becomes the following: Given an inclusion $K^{\times} \subset G \subset (\A^S)^{\times}$ with $G$ closed of finite index, show that $G\cdot\overline{K^{\times}} \subset (\A^S)^{\times}$ closed. Let $x \in (\A^S)^{\times}$ lie in the closure of $G\overline{K^{\times}}$. Thus there exists a sequence $\{\alpha_ng_n\}_{n>0}$ tending to $x$ with $\alpha_n \in K^{\times}$ and $g_n \in G$. Because $G \subset (\A^S)^{\times}$ is of finite index, $K^{\times}/(G \cap K^{\times})$ is finite. Passing to a subsequence, therefore, we may assume that the $\alpha_n$ all agree modulo $G \cap K^{\times}$. Then replacing $(\alpha_n, g_n)$ by $(\alpha_1, (\alpha_1^{-1}\alpha_n)g_n)$, we may assume that the $\alpha_n$ all agree -- say,  $\alpha_n = \alpha$. Since we are free to modify $x$ by an element of $K^{\times}$, we may therefore assume that all $\alpha_n = 1$. Then $x \in \overline{G} = G$. This completes the proof that $R$ is closed in $Q$ (and hence that $[p]_R$ is a homeomorphism onto a closed subgroup).

Now we have the closed finite index subgroup $B' \subset C\cap(\prod_{v \in S} K_v^{\times})$. Because $B'$ is closed of finite index, the map
\[
\frac{C\cap(\prod_{v \in S} K_v^{\times})}{B'} \rightarrow \frac{\overline{C\cap(\prod_{v \in S} K_v^{\times})}}{\overline{B'}}
\]
is an isomorphism. But because $C$ is open and closed, we have an equality
\[
\overline{C \cap (\prod_{v \in S}K_v^{\times})} = C \cap \overline{\prod_{v \in S} K_v^{\times}}.
\]
Thus $\overline{B'} \subset C \cap \overline{\prod_{v \in S}K_v^{\times}}$ is a closed subgroup of index dividing $p^{\lfloor(n+m)(1+{\rm{log}}(n+m))\rfloor}$.

By Lemma \ref{extendingchars}, we have a commutative diagram of continuous homomorphisms
\[
\begin{tikzcd}
C \cap \overline{\prod_{v \in S} K_v^{\times}} \arrow{dr} \arrow{r} & C \arrow{d}{\chi} \\
& \left[C \cap \overline{\prod_{v \in S} K_v^{\times}}\right]/\overline{B'}
\end{tikzcd}
\]
We claim that $B := \ker(\chi)$ satisfies the requirements of the proposition. Indeed, $B \subset C$ is closed of index dividing $\#[C \cap \overline{\prod_{v \in S} K_v^{\times}}]/\overline{B'}\mid p^{\lfloor(n+m)(1+{\rm{log}}(n+m))\rfloor}$, and $$B \cap K_v^{\times} = \overline{B'}\cap K_v^{\times} = B' \cap K_v^{\times} = G_v,$$ the penultimate equation holding because $B'$ is closed in $C \cap(\prod_{v \in S}K_v^{\times})$, and the final equation being (\ref{effapproxadelicversionpfeqn1}).
\end{proof}

Now we come to the following result, interesting in its own right, which will play a crucial role in the proof of Theorem \ref{charpquantitative}.

\begin{theorem}
\label{realizinglocexts}
Let $K$ be a global function field of characteristic $p$, and let $F/K$ be a finite abelian extension of degree $p^m$. Let $S$ be a finite set of places of $F$. Suppose given for each $v \in S$ a $p$-primary extension $E^v/F_v$ of degree $\leq p^n$ such that $E^v$ is abelian over $K_w$, where $v\mid w$. Then there is a $p$-primary extension $E/F$ -- abelian over $K$ -- such that, for each place $v'$ of $E$ lying above a place $v \in S$, $E_{v'}$ and $E^v$ are $F_v$-isomorphic, and such that
\begin{itemize}
\item[(i)] $[E: F] \leq p^{(n+m)(1+{\rm{log}}(n+m))}$.
\item[(ii)] If $m = 0$ $($so $F = K$$)$, and one has for each $v \in S$ an inclusion $\Gal(E^v/K_w) \hookrightarrow H$, then there is an inclusion $\Gal(E/K) \hookrightarrow H$.
\end{itemize}
\end{theorem}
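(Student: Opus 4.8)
The plan is to translate everything into the idele class group via class field theory and then apply Proposition \ref{effapproxadelicversion}. Let $C_F := \A_F^\times/F^\times$ be the idele class group of $F$ and $C_K := \A_K^\times/K^\times$ that of $K$. Since $F/K$ is abelian of degree $p^m$, global class field theory identifies $F$ with a closed subgroup $C \subset C_K$ of index $p^m$; more precisely, the norm map $N_{F/K}\colon C_F \to C_K$ has image a closed subgroup of index $p^m$, and $C_F \to \im(N_{F/K}) = C$ is... well, not injective, so I would instead work directly on the $K$-side, viewing everything through the fundamental diagram relating $C_F$ and $C_K$. The cleanest route: abelian extensions $E/F$ correspond to open finite-index subgroups of $C_F$, and the condition that $E$ be abelian \emph{over $K$} corresponds (by the usual functoriality of the reciprocity map under the transfer/Verlagerung and norm) to the preimage in $C_F$ of an open finite-index subgroup of $C_K$ contained in $C$. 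So the whole problem lives on $C_K$: I want an open subgroup $B \subset C \subset C_K$ with prescribed local behavior at the places of $S$, and with $[C:B]$ bounded as in (i), or $C_K/B \hookrightarrow H$ in case (ii).

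First I would set up the local translation. For $v \in S$ lying over $w$ of $K$, the extension $E^v/F_v$, being abelian over $K_w$, corresponds under local class field theory to an open finite-index subgroup $G_w \subset K_w^\times$ with $G_w \subset N_{F_v/K_w}(F_v^\times)$, and $[K_w^\times : G_w]$ is $p$-power of size $\leq [F_v:K_w]\cdot p^n$, hence bounded by $p^{n+m}$ (using $[F_v:K_w] \mid p^m$); one must be a little careful and instead bound $[N_{F_v/K_w}(F_v^\times) : G_w]$ or rather the index inside $C\cap K_w^\times$, but the point is that the hypothesis ``$E^v/F_v$ has degree $\leq p^n$'' gives index $\leq p^n$ of $G_w$ inside the subgroup of $K_w^\times$ cut out by $F_v$. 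Collecting the distinct places $w$ of $K$ below $S$ into a finite set $S_0$, I get closed subgroups $G_w \subset C \cap K_w^\times$ of $p$-power index $\leq p^n$ for $w \in S_0$. Now apply Proposition \ref{effapproxadelicversion}(i) with this $C$ (of index $p^m$) and these $G_w$: it produces a closed subgroup $B \subset C$ with $[C:B] \mid p^{\lfloor(n+m)(1+\log(n+m))\rfloor}$ and $B \cap K_w^\times = G_w$ for all $w \in S_0$. Let $E/F$ be the abelian extension of $K$ corresponding to $B$ (i.e.\ $E$ is the class field of $K$ for $B$, which contains $F$ since $B \subset C$). Then $[E:F] = [C:B] \leq p^{(n+m)(1+\log(n+m))}$, giving (i), and the local condition $B \cap K_w^\times = G_w$ translates, via compatibility of local and global reciprocity and of the norm map under completion, into $E_{v'}$ being $F_v$-isomorphic to $E^v$ for every place $v'$ of $E$ over $v \in S$ (here one uses that $E/K$ is Galois, so all places of $E$ over a given $v$ behave the same). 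For (ii), when $m=0$ so $C = C_K$, apply part (ii) of the Proposition with the group $H$: the hypothesis that each $\Gal(E^v/K_w) \hookrightarrow H$ translates to $K_w^\times/G_w \hookrightarrow H$ (using local class field theory: $\Gal(E^v/K_w) \cong K_w^\times/N_{E^v/K_w}((E^v)^\times)$ and $G_w = N_{E^v/K_w}((E^v)^\times)$ when $F = K$), so the Proposition yields $B$ with $C_K/B \hookrightarrow H$, i.e.\ $\Gal(E/K) \hookrightarrow H$.

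The main obstacle I anticipate is not any single hard estimate — those are packaged in Proposition \ref{effapproxadelicversion} and Lemma \ref{subgpofboundedindex} — but rather the bookkeeping in the dictionary between the ideles of $F$ and of $K$: making sure that ``$E/F$ abelian over $K$'' really does correspond to a subgroup of $C_K$ (via the transfer map $C_K \to C_F$ on reciprocity groups, whose relation to norms must be used correctly), that a single place $v$ of $S$ and the possibly several places of $E$ above it are handled uniformly, and that distinct $v \in S$ lying over the same $w$ of $K$ impose \emph{compatible} conditions (they do, automatically, since they all record the same local extension $E^v/F_v$ with $F_v = K_w\otimes\cdots$ — but if two distinct $v$'s lie over one $w$ one must check the prescribed $E^v$'s are the same up to $F_v$-isomorphism, which is part of the hypothesis being well-posed, or one simply shrinks the relevant $G_w$ to the intersection). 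A secondary subtlety is the precise index bookkeeping $p$-adically: one wants $[K_w^\times:G_w] \leq p^n$ \emph{relative to the subgroup cut out by $F$}, i.e.\ index $\leq p^n$ inside $C \cap K_w^\times$ rather than inside $K_w^\times$, which is exactly the input format of the Proposition, so this is just a matter of stating the local translation in the right normalization. Once the dictionary is pinned down, the theorem falls out immediately from the Proposition.
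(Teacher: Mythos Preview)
Your proposal is correct and takes exactly the same approach as the paper: the paper's proof is a single sentence stating that the theorem is Proposition~\ref{effapproxadelicversion} translated via local and global class field theory (and their compatibility), and your write-up is a careful expansion of precisely that translation. The bookkeeping concerns you flag (the correspondence between subgroups of $C_K$ contained in $C$ and abelian extensions of $K$ containing $F$, the identification $C\cap K_w^\times = N_{F_v/K_w}(F_v^\times)$ giving the correct index bound $\leq p^n$, and the possible collision of several $v\in S$ over one $w$) are real but routine once stated, and the paper simply suppresses them.
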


\begin{proof}
This is exactly the statement of Proposition \ref{effapproxadelicversion} translated from adelic language via local and global class field theory (and via the compatibility between the two).
\end{proof}

Although we will not require it, the following corollary is sufficiently interesting that we record it.

\begin{corollary}
\label{realizinglocextsgeneralcase}
Let $K$ be a global function field of characteristic $p$, and let $K^{\rm{p-sol}}$ be the maximal solvable $p$-primary extension of $K$. Let $S$ be a finite set of places of $K$, and suppose given for each $v \in S$ a finite $p$-primary extension $F^v/K_v$ of degree $\leq p^n$. Then there is a subextension $K^{\rm{p-sol}}/F/K$ with $[F: K] \leq p^{\frac{1}{2}n(n+1)(1+{\rm{log}}(n))}$ such that, for each place $v'$ of $F$ lying above a place $v \in S$, $F_{v'}$ and $F^v$ are $K_v$-isomorphic.
\end{corollary}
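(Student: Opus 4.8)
The plan is to realize each $F^v$ one abelian layer at a time, iterating Theorem~\ref{realizinglocexts}. First comes a local preparation. Since every completion of a subextension of $K^{\rm{p-sol}}/K$ lies inside the maximal pro-$p$ extension of the corresponding $K_v$, a necessary condition for the conclusion is that each $F^v/K_v$ have $p$-group Galois closure, and I work under this hypothesis. Writing $\widetilde{F^v}/K_v$ for the Galois closure, $G_v := \Gal(\widetilde{F^v}/K_v)$ for its ($p$-)group, and $F^v = (\widetilde{F^v})^{H_v}$, I would refine a subnormal series of $H_v$ in $G_v$ to one in which all successive indices equal $p$; the corresponding fixed fields give a tower
\[
K_v = L^v_0 \subset L^v_1 \subset \cdots \subset L^v_{k_v} = F^v, \qquad p^{k_v} = [F^v : K_v], \quad k_v \le n,
\]
with each $L^v_j / L^v_{j-1}$ cyclic of degree $p$. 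Padding the shorter towers with trivial steps, arrange that all of them have common length $r := \max_v k_v \le n$.

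Next comes the global construction: a tower $K = K_0 \subset K_1 \subset \cdots \subset K_r = F$ of global function fields, built so that at each stage $j$ every place $w'$ of $K_{j-1}$ lying over some $v \in S$ satisfies $(K_{j-1})_{w'} \simeq L^v_{\min(j-1,\,k_v)}$. Given $K_{j-1}$, let $S_{j-1}$ be the finite set of its places above $S$, and apply Theorem~\ref{realizinglocexts}(i) with $K_{j-1}$ in the role of both the global field and its trivial abelian subextension (so the parameter $m$ there is $0$), with place set $S_{j-1}$, and with the local extension at $w' \mid v$ taken to be $L^v_{\min(j,\,k_v)}/(K_{j-1})_{w'} \simeq L^v_{\min(j,\,k_v)}/L^v_{\min(j-1,\,k_v)}$, which is abelian of degree $\le p$ (so the parameter $n$ there is $1$). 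This yields $K_j/K_{j-1}$, abelian over $K_{j-1}$, realizing all these local extensions, with $[K_j : K_{j-1}] \le p^{(1)(1+\log 1)} = p$, and — invoking the ``$E_{v'} \simeq E^v$ for every $v' \mid v$'' clause of Theorem~\ref{realizinglocexts} — with $(K_j)_{w''} \simeq L^v_{\min(j,\,k_v)}$ for every place $w''$ of $K_j$ above $S$, which feeds the inductive hypothesis into stage $j+1$. After $r$ stages, $F := K_r$ has $F_{v'} \simeq F^v$ for all $v' \mid v \in S$; being a tower of abelian extensions of $p$-power degree, $F/K$ has $p$-group Galois closure (an easy induction: the Galois closure of a tower of abelian $p$-extensions again has $p$-group Galois group), hence is in particular solvable and $p$-primary, so $F \subseteq K^{\rm{p-sol}}$; and $[F:K] = \prod_{j=1}^{r} [K_j : K_{j-1}] \le p^{r} \le p^{n} \le p^{\frac12 n(n+1)(1+\log(n))}$.

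The main obstacle is the bookkeeping, in two respects. First, one must keep the inductive hypothesis — all completions of $K_{j-1}$ over a place $v \in S$ are isomorphic to $L^v_{j-1}$ — alive through all $r$ stages even though a single place of $K$ may split into many places of $K_{j-1}$: Theorem~\ref{realizinglocexts} accommodates the whole finite set $S_{j-1}$ at once and its conclusion pins down every completion over each place of $S$, so the point is to be disciplined about feeding it, at stage $j$, exactly the layer $L^v_j$ of the fixed global tower for $F^v$ (not just some degree-$p$ extension of the right field), so that $F^v$ itself — not a variant — emerges at the end. Second is the degree accounting and the precise exponent $\frac12 n(n+1)(1+\log(n))$: the variant above, which resets the base at each stage, requires each layer only to be abelian over the immediately preceding field and gives the comfortable bound $p^n$; if instead one keeps $K$ fixed as the base throughout — permissible only when the $F^v$ are abelian over $K_v$, because of the ``$E^v$ abelian over $K_w$'' hypothesis — then at stage $j$ the base $K_{j-1}$ has degree $p^{j-1}$ over $K$, Theorem~\ref{realizinglocexts}(i) contributes a factor $p^{j(1+\log j)}$, and $\prod_{j=1}^{n} p^{j(1+\log j)} \le p^{(1+\log n)\sum_{j=1}^{n} j} = p^{\frac12 n(n+1)(1+\log(n))}$ reproduces the stated bound. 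For an $F^v$ not decomposing into abelian layers (e.g.\ a non-Galois degree-$p$ extension) the abelian-layer reduction must instead be replaced by a direct weak-approximation argument producing a global extension of the same $p$-power degree with the prescribed completions, using that an extension of $p$-power degree is automatically $p$-primary. Finally one verifies, via Definition~\ref{localptower} and the tower structure, that $F$ genuinely lies in $K^{\rm{p-sol}}$.
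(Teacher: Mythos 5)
Your construction is, at bottom, the same as the paper's: decompose each local extension into a tower of abelian layers and realize the layers globally one at a time by iterating Theorem~\ref{realizinglocexts} with $m=0$ (resetting the base field at each stage), so that the resulting global field is a tower of abelian extensions of $p$-power degree and hence lies in $K^{\rm{p-sol}}$. The differences are worth recording. The paper takes at most $n$ abelian layers with $[F^v_i:F^v_{i-1}]\le p^{\,n-(i-1)}$ and sums the exponents $\sum_j j(1+\log j)$ to reach $\tfrac12 n(n+1)(1+\log n)$; you refine all the way to cyclic degree-$p$ layers, so each global stage costs only $p^{(1)(1+\log 1)}=p$ and you obtain the sharper bound $p^{r}\le p^{n}$. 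The price of the refinement is your standing hypothesis that each $F^v/K_v$ has $p$-group Galois closure (needed so that $H_v$ is subnormal in $G_v$ with index-$p$ steps). But, as you correctly argue, that hypothesis is forced by the conclusion: $K^{\rm{p-sol}}/K$ is Galois with pro-$p$ group, so every completion of a subextension sits inside the maximal pro-$p$ extension of $K_v$. Here you are in fact more careful than the paper, whose proof asserts that solvability of the local Galois group produces an abelian tower $K_v=F^v_0\subset\cdots\subset F^v_n=F^v$ for an \emph{arbitrary} $F^v$ of $p$-power degree. That assertion fails: a subgroup of a solvable group need not admit a subnormal series with abelian quotients, and concretely a non-Galois degree-$p$ extension of $K_v$ (Galois closure with group $\Z/p\rtimes\Z/d$, $d>1$; such extensions exist over local function fields for $p$ odd, with an $A_4$-quartic playing the same role for $p=2$) has no intermediate fields, is not abelian, and cannot occur as a completion of any subfield of $K^{\rm{p-sol}}$. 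So the corollary genuinely requires your hypothesis, and your main argument, under that hypothesis, is correct.

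The one loose end in your write-up is the closing suggestion that an $F^v$ not decomposing into abelian layers could be handled by ``a direct weak-approximation argument'' producing a global extension of the same $p$-power degree with the prescribed completions. This contradicts your own necessity observation: such an $F^v$ has non-$p$-group Galois closure and therefore cannot be a completion of any subextension of $K^{\rm{p-sol}}$, so no approximation argument can rescue that case --- it must simply be excluded from the statement. Also, your aside about keeping $K$ fixed as the base throughout is not how the paper proceeds (it too resets the base, applying the theorem with $m=0$ and local degree $\le p^{\,n-(i-1)}$ at stage $i$), although the exponent sum $\sum_{j=1}^n j(1+\log j)$ happens to be the same under either accounting.
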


\begin{proof}
The Galois group of every local field is solvable, so we have for each $v \in S$ a filtration $K_v = F^v_0 \subset F^v_1 \subset \dots \subset F^v_n = F^v$ with $F^v_i/F^v_{i-1}$ abelian for each $i$, and such that, if $F^v_i = F^v_{i-1}$, then $F^v_j = F^v_{j-1}$ for all $j \geq i$. In particular, $[F^v_i: F^v_{i-1}] \leq p^{n-(i-1)}$. Iteratively applying Theorem \ref{realizinglocexts}, we obtain a tower of abelian extensions $K = F_0 \subset F_1 \subset \dots \subset F_n =: F$ such that -- at each stage -- $F_i/F_{i-1}$ ``realizes'' the local extension $F^v_i/F^v_{i-1}$ for each $v \in S$, and $[F_i: F_{i-1}] \leq p^{(n-(i-1))(1+{\rm{log}}(n-(i-1)))}$. Thus
\[
{\rm{log}}_p([F: K]) \leq \sum_{i=1}^n i(1+{\rm{log}}(i)) \leq \sum_{i=1}^n i(1+{\rm{log}}(n)) = \frac{1}{2}n(n+1)(1+{\rm{log}}(n)).
\]
\end{proof}

\section{Exhaustion by finite subextensions}
\label{exhaustionsection}

In order to prove Theorems \ref{charpinfinitemult} and \ref{charpinfinitequantitativemult}, we will need to know that certain phenomena occurring in infinite algebraic extensions may in fact be brought down to some finite subextension. In this section we prove a couple of lemmas that are variations on this theme.

\begin{lemma}
\label{exhaustingpts}
Let $L/K$ be an algebraic extension, and let $F/K$ be a finite separable extension of degree $m$. Then there is an intermediate extension $L/M/K$ with $[M: K] \leq m!$ such that $E(L) = E(M)$ for any finite \'etale $K$-group scheme $E$ that splits over $F$.
\end{lemma}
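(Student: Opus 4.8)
The plan is to reduce the problem to the behavior of a single finite group and its representations, and then find a finite subextension of $L/K$ that "sees" all the relevant Galois action. First I would fix a separable closure $K_s \supset L$ and let $G := \Gal(F'/K)$, where $F'/K$ is the Galois closure of $F/K$ inside $K_s$; note $[F':K] \le m!$ since $G$ embeds in the symmetric group $S_m$ acting on the $K$-embeddings of $F$. Any finite \'etale $K$-group scheme $E$ that splits over $F$ also splits over $F'$, so its Galois module $E(K_s)$ is a finite abelian group equipped with an action of $\Gamma := \Gal(K_s/K)$ that factors through the finite quotient $G$. The key observation is that for any Galois extension $L/M/K$ with $L \supseteq F'$, one has $E(L) = E(L)^{\Gal(K_s/L)}$ and similarly for $M$, and since the $\Gamma$-action on $E(K_s)$ factors through $\Gal(F'/K)$, the subgroups $E(M)$ and $E(L)$ depend only on the images of $\Gal(K_s/M)$ and $\Gal(K_s/L)$ in $G$. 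So it suffices to arrange that these two images coincide.

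The natural candidate is $M := F' \cap L$ (compositum considerations: $F'$ may not lie in $L$). More precisely, I would take $M$ to be the subextension of $L/K$ corresponding to the subgroup $\Gal(K_s/L)\cdot \Gal(K_s/F')$ of $\Gamma$; equivalently, $M = L \cap F'$ as subfields of $K_s$, which is a subextension of $L/K$ with $[M:K] \mid [F':K] \le m!$. Then the image of $\Gal(K_s/M)$ in $G = \Gal(F'/K)$ equals the image of $\Gal(K_s/L)$ in $G$, because $\Gal(K_s/M) = \Gal(K_s/L)\cdot\Gal(K_s/F')$ and $\Gal(K_s/F')$ maps to the trivial subgroup of $G$. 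For any finite \'etale $E$ splitting over $F$ (hence over $F'$), the action of $\Gamma$ on $E(K_s)$ kills $\Gal(K_s/F')$, so $E(L) = E(K_s)^{\Gal(K_s/L)} = E(K_s)^{\Gal(K_s/M)} = E(M)$, as desired. The chain $L/M/K$ has $[M:K]\le m!$, completing the argument.

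I do not expect a serious obstacle here; the only points requiring a little care are (a) checking that "$E$ splits over $F$" for a finite \'etale group scheme means precisely that the $\Gamma$-action on $E(K_s)$ is trivial on $\Gal(K_s/F)$, hence a fortiori on $\Gal(K_s/F')$ — this is standard, as finite \'etale $K$-group schemes correspond to finite $\Gamma$-modules and splitting over $F$ means becoming constant, i.e. trivial Galois action, after base change to $F$; and (b) making sure $M$ really is a subextension of $L/K$, which is immediate since $M = L \cap F' \subseteq L$. One subtlety worth a sentence in the writeup: a general finite \'etale $K$-group scheme need not be commutative, but $E(L) = E(M)$ is still a statement about the sets of $\Gal(K_s/L)$- resp. $\Gal(K_s/M)$-fixed points in the finite group $E(K_s)$, and the argument via coincidence of images in $G$ applies verbatim without any commutativity hypothesis.
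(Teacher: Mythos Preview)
Your argument is correct and more direct than the paper's, with one small gap: you tacitly assume $L$ embeds in a separable closure $K_s$ of $K$, i.e., that $L/K$ is separable, whereas the lemma only assumes $L/K$ algebraic. This is easily repaired by first replacing $L$ with its maximal separable subextension $L'/K$ and noting that $E(L)=E(L')$ for any finite \'etale $K$-scheme $E$ (any $K$-homomorphism from a finite separable $K$-algebra into $L$ lands in $L'$). The paper makes precisely this reduction before proceeding.

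Beyond that reduction, the two proofs diverge. The paper embeds $E$ into the Weil restriction $\R_{F/K}(E_F)$, observes that the $L$-points of the latter are determined by the number of connected components of $\Spec(F\otimes_K L)$, and then constructs the intermediate field by a combinatorial argument about the $\Gal(K_s/K)$-action on ordered partitions of the roots of a primitive element for $F$, bounding the index of the relevant stabilizer by a multinomial coefficient $\leq m!$. Your route---take the Galois closure $F'/K$ and set $M=L\cap F'$---is shorter, avoids the Weil-restriction detour, and makes the bound $m!$ transparent as simply $[F':K]$. Both constructions yield a single field working uniformly for all $E$ split by $F$; the paper's field is in general a subfield of yours (its defining stabilizer contains $\Gal(K_s/L)\cdot\Gal(K_s/F')$), but neither improves on the uniform bound $m!$.
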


\begin{proof}
For any $E$ as in the lemma, we have the natural inclusion $E \hookrightarrow E' := {\rm{R}}_{F/K}(C)$, where ${\rm{R}}_{F/K}$ denotes Weil restriction and $C := E_F$ is a constant group scheme, and it suffices to prove the conclusion of the lemma for $E'$. We have that $E'(L) = C(F \otimes_K L) = \prod C$, where the product is over the connected components of ${\rm{Spec}}(F \otimes_K L)$. It therefore suffices to find $L/M/K$ with $M/K$ separable, $[M: K] \leq m!$, and such that ${\rm{Spec}}(F \otimes_K L)$ and ${\rm{Spec}}(F \otimes_K M)$ have the same number of components. If $L'$ is the maximal subfield of $L$ that is separable over $K$, then $F \otimes_K L$ and $F \otimes_K L'$ have homeomorphic spectra, hence we may and do assume that $L/K$ is separable.

If we write $F \simeq K[X]/f(X)$, then the number of components of $F \otimes_K L$ is the number of irreducible factors of $f(X)$ over $L$. Let $f(X) = f_1\dots f_k$ be the factorization of $f$ over $L$, and let $d_i := {\rm{deg}}(f_i)$. Let $G_L \subset G_K$ denote the Galois groups of $L$ and $K$, respectively (with respect to some fixed separable closure $L_s$ of $L$). Then $G_L$ acts on the roots $R := \{\alpha_1, \dots, \alpha_m\}$ of $f$ with $k$ orbits of sizes $d_1, \dots, d_k$. Consider the action of $G_K$ on ordered $k$-tuples of disjoint subsets of $R$ of sizes $d_1, \dots, d_k$ which is induced by the permutation action on $R$. Then $G_L$ stabilizes the element corresponding to the roots of $f_1$, the roots of $f_2$, et cetera. Let $G_H \subset G_K$ denote the stabilizer of this element, and $L/H/K$ the corresponding field, so that $[H: K] = [G_K: G_H]$ is at most the number of ordered tuples of disjoint sets as above. This number is readily calculated to be $m!/(d_1!d_2!\dots d_k!) \leq m!$, and over the field $H$, $f$ already has the same factorization $f_1\dots f_k$ that it has over $L$, so that $F\otimes_K H$ has the same number of components as $F \otimes_K L$. This proves the lemma.
\end{proof}

\begin{lemma}
\label{exhaust Sha}
Let $K$ be a global field, let $L/K$ be an algebraic extension, and let $E$ be a finite \'etale commutative $K$-group scheme.
\begin{itemize}
\item[(i)] If $E$ splits over a finite Galois extension $H/K$ of degree $m$, then there exists an intermediate extension $L/F/K$ with $[F: K] \leq m!$ and a finite subgroup $A \subset {\rm{H}}^1(F, E)$ such that, for every intermediate extension $L/F'/F$ with $F'/F$ finite, $\Sha^1(F', E)$ is contained in the image of $A$ under the map ${\rm{H}}^1(F, E) \rightarrow {\rm{H}}^1(F', E)$, and such that every element of $A$ maps to an element of $\Sha^1(M, E)$ for some intermediate extension $L/M/K$ finite over $K$. Furthermore,
\item[(ii)] If $L/K$ is the maximal abelian $p$-primary extension, where $p = {\rm{char}}(K) > 0$, then for any finite set $S$ of places of $F$, there is an $F$-finite subextension $F' = F'_S/F$ of $L$ such that, for some universal and effectively computable $c > 0$,
\begin{equation}
\label{exhaust Shapfeqn7}
{\rm{log}}_p([F': K]) \leq cm{\rm{log}}(m)^2,
\end{equation}
and such that, for every place $w$ of $F'$ above $S$ and every $a \in A$, $a \mapsto 0 \in {\rm{H}}^1(F'_w, E)$.
\end{itemize}
\end{lemma}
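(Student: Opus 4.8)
The plan is to reduce (i) to the Galois cohomology of a single finite group (via Lemma~\ref{exhaustingpts} and the \v{C}ebotarev density theorem), and to reduce (ii) to a local degree estimate which is then fed into Theorem~\ref{realizinglocexts}; the bulk of the work sits in that last local estimate.

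\emph{Proof of (i).} Apply Lemma~\ref{exhaustingpts} with the role of its ``$F$'' played by $H$: one gets an intermediate extension $L/F/K$ with $[F:K]\le m!$ such that $E'(L)=E'(F)$ for every finite \'etale $K$-group scheme $E'$ split by $H$. Taking $E'=\mathrm{R}_{H'/K}(\mathbf{Z}/\ell\mathbf{Z})$ with $H':=H\cap L$ (whose Galois closure over $K$ lies in $H$), and comparing $\#\pi_0\,\Spec(H'\otimes_K L)=[H':K]$ with $\#\pi_0\,\Spec(H'\otimes_K F)\le[H':K]$, one forces $H'\hookrightarrow F$; since any $K$-embedding $H'\to F$ has image a conjugate of $H'$ inside $H$, which then lies in $H\cap F\subseteq H\cap L=H'$, this gives $H\cap F=H\cap L$. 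Hence for every finite $L/F'/F$ we have $H\cap F'=H\cap F$, so the restriction map $\Gal(F'H/F')\to\Gal(FH/F)=:\Gamma$ is an isomorphism, and $E$ is split by $F'H$ with $E(F'H)=E(\overline{K})=:M_0$. Given $\xi\in\Sha^1(F',E)$, its restriction to $F'H$ vanishes, because $\Sha^1$ of a constant group over a global field is trivial (a nonzero character cuts out a cyclic extension which, by \v{C}ebotarev, is locally nontrivial at infinitely many places); so $\xi=\mathrm{inf}_{F'}(\bar\xi)$ for a unique $\bar\xi\in\mathrm{H}^1(\Gamma,M_0)$. Moreover, for each cyclic $C\le\Gamma$, \v{C}ebotarev furnishes infinitely many places $w$ of $F'$, unramified in $F'H$, with decomposition group $C$, and $\xi_{F'_w}=0$ forces $\bar\xi|_C=0$ (inflation $\mathrm{H}^1(C,M_0)\hookrightarrow\mathrm{H}^1(F'_w,E)$ being injective). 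By the compatibility of inflation and restriction together with the isomorphism $\Gal(F'H/F')\simeq\Gamma$, we obtain $\xi=a_{F'}$ with $a:=\mathrm{inf}_F(\bar\xi)\in\mathrm{H}^1(F,E)$, and $a_{F'}=\xi\in\Sha^1(F',E)$ shows that $a$ becomes locally trivial over the finite subextension $M:=F'$. This suggests the definition
\[
A:=\bigl\{\,a\in\mathrm{inf}_F(\mathrm{H}^1(\Gamma,M_0))\ :\ a_M\in\Sha^1(M,E)\text{ for some finite }L/M/K\,\bigr\},
\]
a subgroup of the finite group $\mathrm{inf}_F(\mathrm{H}^1(\Gamma,M_0))$ (if $a_1,a_2$ become locally trivial over $M_1,M_2$, then $a_1\pm a_2$ does so over $M_1M_2$). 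The analysis above shows every class of $\Sha^1(F',E)$ lies in the image of $A$ in $\mathrm{H}^1(F',E)$, while every element of $A$ is locally trivial over some finite $L/M/K$ by construction. This proves (i).

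\emph{Proof of (ii).} Now $L/K$ is the maximal abelian $p$-primary extension; keep $F$ and $A$ from (i), noting that $[F:K]$ is a power of $p$ with $[F:K]\le m!$. Choose one finite $L/M/K$ over which \emph{every} element of $A$ becomes locally trivial (the compositum of the individual witnesses). For $v\in S$ and $w\mid v$ in $M$, the finite $p$-group $A_v:=\mathrm{res}_{F\to F_v}(A)\subseteq\mathrm{H}^1(F_v,E)$ is killed by $M_w/F_v$, which is $p$-primary (its degree divides the $p$-power $[M:F]$) and abelian over $K_{v_0}$ ($v_0:=v|_K$), being a subextension of the abelian $M/K$; since each class in $A_v$ is inflated from $\mathrm{H}^1(\Gal(F_vH/F_v),M_0)$ with $|\Gal(F_vH/F_v)|\le m$, the group $A_v$ has exponent dividing $p^{v_p(m)}\le m$. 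The key point is the following local claim: \emph{$A_v$ is killed by a $p$-primary extension $E^v/F_v$ which is abelian over $K_{v_0}$ and has degree at most $p^{cm\log m}$} (any bound of shape $p^{O(m\log m)}$ would do). Granting this, apply Theorem~\ref{realizinglocexts} to the family $\{E^v\}_{v\in S}$ (legitimate since each $E^v/K_{v_0}$ is abelian and $F/K$ is abelian of $p$-power degree): one obtains a $p$-primary $F'/F$, abelian over $K$ and hence contained in $L$, realizing all the $E^v$, with
\[
\log_p[F':K]\ \le\ (n+m')\bigl(1+\log(n+m')\bigr)+m',
\]
where $n:=\max_{v\in S}\log_p[E^v:F_v]=O(m\log m)$ and $m':=\log_p[F:K]\le\log_p(m!)=O(m\log m)$. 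As $n+m'=O(m\log m)$, the right-hand side is $O(m(\log m)^2)$, which is the asserted bound; and by construction $a\mapsto 0$ in $\mathrm{H}^1(F'_w,E)$ for every $a\in A$ and every place $w$ of $F'$ above $S$, since $F'_w\simeq E^v$ over $F_v$.

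\emph{The main obstacle} is the local claim above. Note that $A_v$ is always killed by $F_vH/F_v$ (degree $\le m$), and even by the subextension $(F_vH)^Q$ fixed by a Hall $p'$-subgroup $Q$ of $\Gal(F_vH/F_v)$ (the $p$-primary part of $\mathrm{H}^1(\Gal(F_vH/F_v),M_0)$, which contains $A_v$, restricts to $0$ on $Q$ for order reasons), an extension of $p$-power degree $\le p^{v_p(m)}\le m$; and when $H/K$ itself is abelian this extension is already abelian over $K_{v_0}$, so one may take $E^v=(F_vH)^Q$ outright. The difficulty in general is that this $E^v$ need not be abelian over $K_{v_0}$, and one must instead produce an abelian-over-$K_{v_0}$ $p$-primary killing extension of controlled degree; this is a question purely about the local function field $F_v$, and it is here that the structure of $\mathrm{H}^1(F_v,E)$ (in particular its finiteness) and \v{C}esnavi\v{c}ius' local duality for finite commutative group schemes over local function fields are brought to bear. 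The exponent extracted from this local analysis is exactly what propagates into the $\log(m)$-power in the bound, and thence into Theorem~\ref{charpquantitative}. All remaining ingredients --- the vanishing of $\Sha^1$ of constant groups, the inflation--restriction formalism, and the degree bookkeeping --- are routine.
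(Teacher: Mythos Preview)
Your argument for (i) is correct and runs parallel to the paper's, though phrased via inflation--restriction rather than Weil restriction. The paper uses the exact sequence $0 \to E \to {\rm{R}}_{H/K}(E_H) \to Q \to 0$, observes $\Sha^1(N, {\rm{R}}_{H/K}(E_H)) = 0$ for all finite $N/K$ (by \v{C}ebotarev, exactly as you argue for constant groups), and applies Lemma~\ref{exhaustingpts} to stabilize $Q(\cdot)$ rather than $H\cap(\cdot)$; the group $A$ is then carved out of $B := \delta(Q(F))$. Since $\delta(Q(F'))$ is precisely the inflation image of ${\rm{H}}^1(\Gal(F'H/F'), M_0)$, the two pictures coincide. (Your \v{C}ebotarev remark that $\bar\xi|_C = 0$ for every cyclic $C \le \Gamma$ is correct but plays no role; it can be dropped.)

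The genuine gap is in (ii): you isolate the right local claim but do not prove it, and the suggestion that \v{C}esnavi\v{c}ius' local duality is required is a red herring. The paper's resolution is much simpler and works equally well in your framework. Fix $v \in S$ and a place $u$ of $L$ above $v$. Every $a \in A$ dies in ${\rm{H}}^1(L_u, E)$, since $a$ lies in $\Sha^1(M, E)$ for some finite $M \subset L$ and hence dies in $M_{u'} \subset L_u$. Now apply Lemma~\ref{exhaustingpts} \emph{a second time}, to the local tower $L_u/F_v$ with splitting field $F_vH$ (of degree $\le m$ over $F_v$): one obtains an intermediate $(F')^v \subset L_u$ with $[(F')^v : F_v] \le m!$ such that ${\rm{R}}_{H/K}(E_H)((F')^v) = {\rm{R}}_{H/K}(E_H)(L_u)$ --- or, in your language, such that $H \cap (F')^v = H \cap L_u$ and hence $\Gal((F')^vH/(F')^v) = \Gal(L_uH/L_u)$. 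Either description shows at once that every $a \in A$ already dies in $(F')^v$. The point you overlooked is that $(F')^v$ sits \emph{inside} $L_u$; since $L/K$ is abelian, $(F')^v/K_{v_0}$ is automatically abelian, and no separate construction of an ``abelian-over-$K_{v_0}$ killing field'' is needed. Feeding these $(F')^v$ (each of degree $\le m!$, so $\log_p$ of size $O(m\log m)$) into Theorem~\ref{realizinglocexts} then gives the global $F'$ with the stated bound, exactly as in your final paragraph.
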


\begin{proof}
Let $H/K$ be a finite separable extension of degree $m$ splitting $E$. First, for any finite extension $N/K$, we claim that
\begin{equation}
\label{exhaust Shapfeqn1}
\Sha^1(N, {\rm{R}}_{H/K}(E_H)) = 0.
\end{equation}
Indeed, we have an isomorphism of $K$-algebras $N \otimes_K H \simeq \prod_{i=1}^r N_i$ for some finite field extensions $N_i/K$. Then
\[
\Sha^1(N, {\rm{R}}_{H/K}(E_H)) \simeq \prod_{i=1}^r \Sha^1(N_i, E_H) = 0,
\]
the last equality a consequence of the Cebotarev density theorem because $E_H$ is constant.

Now consider the finite \'etale $K$-group scheme $Q$ defined by the following exact sequence
\begin{equation*}
0 \longrightarrow E \longrightarrow {\rm{R}}_{H/K}(E_H) \xlongrightarrow{\pi} Q \longrightarrow 0,
\end{equation*}
in which the left map is the canonical inclusion. Then $Q$ and ${\rm{R}}_{H/K}(E_H)$ are finite \'etale $K$-group schemes which split over $H$. By Lemma \ref{exhaustingpts}, therefore, one has that $Q(F) = Q(L)$ for some intermediate $L/F/K$ with $[F: K] \leq m!$. It follows from (\ref{exhaust Shapfeqn1}) that, for any intermediate $L/F'/F$ with $F'/F$ finite, every element of $\Sha^1(F', E)$ lies in the image of $B := \delta(Q(F))$ (with $\delta$ the connecting map) under the map ${\rm{H}}^1(F, E) \rightarrow {\rm{H}}^1(F', E)$. To complete the proof of (i), we take $A \subset B$ to be the subgroup consisting of elements which land in $\Sha^1(M, E)$ for some intermediate $F$-finite $L/M/F$.

For (ii), for each $v \in S$ let $u$ be a place of $L$ above $v$. Then we have by Lemma \ref{exhaustingpts} again a subextension $L_u/(F')^v/F_v$ with $[(F')^v: F_v] \leq m!$ -- and hence
\begin{equation}
\label{exhaust Shapfeqn4}
{\rm{log}}_p([(F')^v: F_v]) \leq c_1m{\rm{log}}(m)
\end{equation}
for some universal effectively computable $c_1 > 0$ -- such that $${\rm{R}}_{H/K}(E_H)((F')^v) = {\rm{R}}_{H/K}(E_H)(L_u).$$ Every element of $A$ is $\delta(q)$ for some $q \in Q(F)$, and goes to $0$ in ${\rm{H}}^1(L_u, E)$ (since it lands in $\Sha$ upon passage to some finite subextension of $L$), hence $q$ lifts to an element of ${\rm{R}}_{H/K}(E_H)(L_u) = {\rm{R}}_{H/K}(E_H)((F')^v)$. It follows that $a \mapsto 0 \in {\rm{H}}^1((F')_v, E)$. Because
\begin{equation}
\label{exhaust Shapfeqn6}
{\rm{log}}_p([F: K]) \leq c_1m{\rm{log}}(m),
\end{equation}
Theorem \ref{realizinglocexts} provides a subextension $L/F'/F$ such that $F'_w \simeq (F')^v$ as $F_v$-algebras for every place $w \mid v \in S$ of $F'$, and such that (using (\ref{exhaust Shapfeqn4}))
\begin{equation}
\label{exhaust Shapfeqn5}
{\rm{log}}_p([F': F]) \leq c_2m{\rm{log}}(m)(1+{\rm{log}}(c_2m{\rm{log}}(m))) \leq c_3m{\rm{log}}(m)^2,
\end{equation}
for some universal effectively computable $c_i > 0$. Then every $a \in A$ dies in ${\rm{H}}^1(F'_w, E)$ for every $w \mid S$. Furthermore, combining (\ref{exhaust Shapfeqn5}) and (\ref{exhaust Shapfeqn6}), we obtain (\ref{exhaust Shapfeqn7}).
\end{proof}

\section{Splitting cohomology classes}
\label{splittingclassessection}

In this section we turn to the proofs of Theorems \ref{charpinfinitemult} and \ref{charpinfinitequantitativemult}, and from these we will easily deduce Theorems \ref{charpinfinite} and \ref{charpquantitative}. In the notation of Theorem \ref{charpinfinitemult}, $M$ descends to a group scheme over some intermediate subextension $L/F/K$ finite over $K$, and $L/F$ is then still a Galois, locally infinite $\ell$-primary extension, hence we may assume that $M$ is defined over $K$. Furthermore, any cohomology class $\xi \in {\rm{H}}^2(L, M)$ is also defined over some finite subextension, hence we may also suppose that $\xi \in {\rm{H}}^2(K, M)$, and we wish to show that it dies over $L$. Thus, for the proofs of both theorems, we may assume that $\xi \in {\rm{H}}^2(K, M)[\ell^n]$, where $\ell = p$ in the case of Theorem \ref{charpinfinitequantitativemult}. An easy induction -- and in the case of Theorem \ref{charpinfinitemult} using the fact that $L/E$ is still Galois and locally infinite for any $K$-finite $L/E/K$ -- allows us to assume that $n = 1$. Further, we may write $M = M_{\ell} \times M'$ with $M_{\ell}$ $\ell$-primary and $M'$ of order prime to $\ell$, so we may as well assume that $M = M_{\ell}$ is $\ell$-primary.

We break the proof up into two stages: First, we show that by passing to a suitable finite extension $H/K$, we may ensure that $\xi_H \in \Sha^2(H, M)$. Then we will show that a class in $\Sha^2[\ell]$ may be killed by passing to a further suitable finite extension. We carry out the first step now.

\begin{proposition}
\label{canlandinsha}
Let $K$ be a global function field, and suppose that either (1) $L/K$ is a Galois locally infinite $\ell$-primary extension, or (2) $\ell = p = {\rm{char}}(K)$ and $L/K$ is the maximal abelian $p$-primary extension. Let $M$ be a finite $\ell$-primary multiplicative $K$-group scheme that splits over a finite Galois extension of $K$ of degree $m$, and let $\xi \in {\rm{H}}^2(K, M)[\ell]$. Then there is a subextension $L/H/K$ with $[H: K]$ finite such that $\xi_H \in \Sha^2(H, M)$. In case (2), we may choose $H$ so that
\begin{equation*}
[H: K] \leq p^{1+ cm{\rm{log}}(m)^3},
\end{equation*}
where $c > 0$ is an absolute, effectively computable constant.
\end{proposition}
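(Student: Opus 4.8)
The plan is to carry out the two-step localization argument outlined in the introduction, assembling three ingredients that are already available: local Tate duality for finite commutative group schemes over local function fields \cite{cesnavicius}; Lemma~\ref{exhaustingpts}, to stabilize the points of an \'etale group scheme inside a finite subextension of $L_w/K_v$; and Theorem~\ref{realizinglocexts}, to realize finitely many prescribed local extensions by a global one of controlled degree. The real content sits in those results; the work here is assembly, together with the observation that local duality turns the vanishing of a local ${\rm{H}}^2$-class into a pairing computation against the finitely many points of the Cartier dual.

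First I would pass to a finite ``bad set''. Since $\xi$ extends to a class over a dense open of the curve underlying $K$, it is standard (as recalled in the introduction) that $\xi$ vanishes in ${\rm{H}}^2(K_v, M)$ for all but finitely many places $v$; let $S$ be the finite set of $v$ with $\xi_v \neq 0$. Let $\widehat{M}$ denote the (\'etale) Cartier dual of $M$, a finite \'etale $K$-group scheme of $\ell$-power order splitting over the same degree-$m$ Galois extension. For each $v \in S$ fix a place $w$ of $L$ above $v$; then $L_w/K_v$ is infinite (by hypothesis in case (1); because already the constant-field $p$-primary subextension of $L$ is locally infinite in case (2)) and $\ell$-primary, hence Galois over $K_v$ with pro-$\ell$ Galois group. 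Applying Lemma~\ref{exhaustingpts} over $K_v$ to $L_w/K_v$ and to $\widehat{M}_{K_v}$ (which splits over a separable extension of $K_v$ of degree $\leq m$) produces a finite subextension $L_w/E^v/K_v$ with $[E^v:K_v] \leq m!$ and $\widehat{M}(E^v) = \widehat{M}(L_w)$; in case (2), $L_w/K_v$ is moreover abelian, so $E^v/K_v$ is automatically abelian and $p$-primary.

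Next I would kill $\xi$ at each $v \in S$ by one further small step. As $\Gal(L_w/K_v)$ is an infinite pro-$\ell$ group and $E^v$ corresponds to a subgroup of finite index, there is a subextension $E^v \subset F^v \subset L_w$ with $[F^v:E^v] = \ell$ (again abelian $p$-primary over $K_v$ in case (2)). Since $F^v \subset L_w$, one has $\widehat{M}(F^v) = \widehat{M}(L_w) = \widehat{M}(E^v)$. For $a \in \widehat{M}(E^v)$, compatibility of the local cup-product pairing with restriction, together with $\inv_{F^v} \circ \Res_{F^v/E^v} = \ell\cdot\inv_{E^v}$ on Brauer groups, gives
\[
\langle a|_{F^v},\, \xi_{F^v}\rangle \;=\; \ell\cdot\langle a,\, \xi_{E^v}\rangle \;\in\; \Q/\Z,
\]
which vanishes because $\xi$ — hence $\langle a, \xi_{E^v}\rangle$ — is killed by $\ell$. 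As $a$ ranges over $\widehat{M}(E^v) = \widehat{M}(F^v)$, local Tate duality over the local field $F^v$ forces $\xi_{F^v} = 0$ (and a fortiori $\xi_{L_w} = 0$). Passing first to $E^v$ is what makes this work: without stabilizing the Cartier-dual points, a degree-$\ell$ step could enlarge $\widehat{M}(F^v)$ and spoil the pairing argument.

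Finally I would realize the $F^v$ globally. In case (1): by Krasner's lemma each $F^v$ embeds over $K_v$ into $(L_0^v)_{w|_{L_0^v}}$ for some finite subextension $L_0^v$ of $L/K$; taking $H$ to be the Galois closure inside $L$ of $\prod_{v \in S} L_0^v$ — a finite subextension of $L/K$ — we get $\xi_{H_{w|_H}} = 0$ at the chosen $w$, hence (as $H/K$ is Galois, so the places of $H$ over a given $v$ are all conjugate) $\xi_{H_u} = 0$ for every place $u$ of $H$ over $S$, i.e.\ $\xi_H \in \Sha^2(H, M)$. In case (2), where $L/K$ is abelian, every $F^v/K_v$ is abelian $p$-primary with $\log_p[F^v:K_v] \leq 1 + \log_p(m!) \leq 1 + c_1 m\log(m)$ uniformly in $v$; Theorem~\ref{realizinglocexts}(i), applied over $K$ (so ``$m$'' $= 0$) with ``$n$'' an integer dominating these $\log_p[F^v:K_v]$, yields a finite $p$-primary $H/K$ with $H_u \cong F^v$ over $K_v$ for each $u \mid v \in S$ and with $\log_p[H:K] \leq n(1+\log n) \leq 1 + cm\log(m)^3$ for a suitable absolute, effectively computable $c$; since $\xi_{F^v} = 0$, this forces $\xi_H \in \Sha^2(H, M)$ with the asserted bound. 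The step I expect to be most delicate is this last one in case (2): one must arrange the $F^v$ to be simultaneously abelian, $p$-primary, and of degree bounded purely in terms of $m$, so that Theorem~\ref{realizinglocexts} can be invoked with a degree estimate of the advertised shape; everything else is a formal consequence of local Tate duality.
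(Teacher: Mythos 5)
Your proposal is correct and follows essentially the same route as the paper: the same finite bad set $S$, Lemma~\ref{exhaustingpts} to stabilize $\widehat{M}$-points in a subextension of $L_w/K_v$ of degree $\leq m!$, a degree-divisible-by-$\ell$ step combined with perfect local Tate duality to kill $\xi_v$, and then a Galois closure (case (1)) or Theorem~\ref{realizinglocexts} (case (2)) to globalize. The only (harmless) difference is that you fold the degree-$\ell$ step into the local fields $F^v$ before globalizing, so case (2) needs a single application of Theorem~\ref{realizinglocexts} rather than the paper's two, which in fact yields the slightly sharper bound $O(m\log(m)^2)$ in the exponent for this proposition.
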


\begin{proof}
For each place $v$ of $K$, let $\xi_v \in {\rm{H}}^2(K_v, M)$ denote the image of $\xi$. By \cite[Th.\,2.18]{cesnavicius}, one has that $\xi_v = 0$ for all but finitely many $v$. Let $S$ be the finite set of places $v$ of $K$ such that $\xi_v \neq 0$. For each $v \in S$, let $w = w_v$ be a place of $L$ lying above $v$. Let $E := \widehat{M}$ be the Cartier dual, a finite \'etale commutative $K$-group scheme.

By Lemma \ref{exhaustingpts}, there is an intermediate subextension $L_w/F^v/K_v$ with $[F^v: K_v] \leq m!$ such that $E(L_w) = E(F^v)$. We have the Tate local duality pairing
\[
{\rm{H}}^2(F^v, M) \times H^0(F^v, E) \xrightarrow{\cup} {\rm{H}}^2(F^v, \mathbf{G}_m) \xrightarrow{{\rm{inv}}} \Q/\Z,
\]
which is perfect, and similarly for any finite extension of $F^v$ \cite[Prop.\,4.10(b)]{cesnavicius}. Because the invariant of a Brauer class multiplies by the degree when passing to a finite extension, if we pass to a an extension $H^v/F^v$ of $L_w$ of degree divisible by $\ell$, then we find that $(\xi_v)_{H^v}$ annihilates $E(F^v) = E(H^v)$ under the local duality pairing, hence $\xi_v$ dies in $H^v$.

In case (1), choose a subextension $L/F/K$ finite over $K$ such that one has for each $v \in S$ a $K_v$-embedding $F^v \hookrightarrow F \otimes_K K_v$. This means that $F$ has a place $v'\mid v$ such that $F^v$ admits a $K_v$-embedding into $F_{v'}$. Now choose $L/H/F$ with $H/K$ Galois such that, for each $w$, there is a place $w'$ of $H$ above $v'$ with $\ell\mid[H_{w'}: F^v]$, possible because $L/K$ is locally infinite. Then $\xi_v$ dies in $H_{w'}$. Because $H/K$ is Galois, it follows that $\xi_v$ dies over $H_{v''}$ for every place $v''\mid v$ of $H$. Thus $\xi_H \in \Sha^2(H, M)$.

In case (2), one argues similarly, except that we keep track of $[H: K]$. We have
\[
{\rm{log}}_{p}(m!) \leq c'm{\rm{log}}(m)
\]
for some absolute constant $c' > 0$. By Theorem \ref{realizinglocexts}, there is a subextension $L/F/K$ with
\begin{equation}
\label{canlandinshapfeqn1}
{\rm{log}}_p([F: K]) \leq c'm{\rm{log}}(m)(1+{\rm{log}}(c'm{\rm{log}}(m))) \leq c''m{\rm{log}}(m)^2
\end{equation}
such that one has for each place $v'$ of $F$ above a place $v \in S$ a $K_v$-isomorphism $F_{v'} \simeq F^v$. Using Theorem \ref{realizinglocexts} again, one finds that there is a subextension $L/H/F$ such that $p\mid [H_{v''}: F_{v'}]$ for each place $v''$ of $H$ lying above a place $v'$ of $F$ above $S$, and such that
\begin{equation}
\label{canlandinshapfeqn2}
{\rm{log}}_p([H: F]) \leq (1+c''m{\rm{log}}(m)^2)(1+{\rm{log}}(1+c''m{\rm{log}}(m)^2)) \leq 1+ c'''m{\rm{log}}(m)^3.
\end{equation}
Because $L/K$ is abelian, $H/K$ is Galois, hence -- as above -- we conclude that $\xi_H \in \Sha^2(H, M)$. Finally, the desired bound on $[H: K]$ follows from (\ref{canlandinshapfeqn1}) and (\ref{canlandinshapfeqn2}).
\end{proof}

Now we turn to the second stage, which is to find a suitable extension killing an element $\xi \in \Sha^2(K, M)[\ell]$. Before doing so, let us recall the $\Sha$-pairings of global Tate duality.
Let $K$ be a global field, and let $G$ be a finite commutative $K$-group scheme with Cartier dual $\widehat{G}$. For $i = 1, 2$, \cite{cesnaviciussha} defines bi-additive perfect pairings
\begin{equation}
\label{shapairing}
\Sha^i(K, G) \times \Sha^{3-i}(K, \widehat{G}) \rightarrow \Q/\Z.
\end{equation}
A more concrete description of these pairings (in a more general context) is given in \cite[\S5.11]{Rosengarten}.
For the perfection of these pairings, see \cite[Lem.\,4.4(a)]{cesnavicius} or \cite[Th.\,1.2.10]{Rosengarten}. Recall that, for a local field $M$, one has an invariant map ${\rm{H}}^2(M, \mathbf{G}_m) \xrightarrow{{\rm{inv}}} \Q/\Z$. In general, these pairings do not extend to pairings ${\rm{H}}^i(K, G) \times {\rm{H}}^{3-i}(K, \widehat{G})$ of the full cohomology groups, or even to pairings between $\Sha^i$ and ${\rm{H}}^{3-i}$. However, the pairing does extend to this latter pair if we lift to the level of cocycles together with additional data, as we now explain.

(We could also carry out the following procedure with the roles of the indices $1$ and $2$ reversed, but the following is what we will need.) Suppose given a global field $K$, and classes $\eta \in \Sha^2(K, G)$ and $\eta' \in {\rm{H}}^1(K, \widehat{G})$. By \cite[Prop.\,2.9.6]{Rosengarten}, these may be represented by fppf \v{C}ech cocycles $\alpha \in {\check{Z}}^2(K, G)$, $\alpha' \in \check{Z}^1(K, \widehat{G})$. Because $\check{H}^2 \hookrightarrow {\rm{H}}^2$ in general, for each place $v$ of $K$ there exists a \v{C}ech cochain $\beta_v \in \check{C}^1(K_v, G)$ with $d\beta_v = \alpha_v$, where $\alpha_v$ denotes the image of $\alpha$ in ${\check{Z}}^2(K_v, G)$. 
In fact, we claim that for all but finitely many $v$, we may choose $\beta_v$ to be the image of an element of $\check{C}^1(\calO_v, \Gm)$, where $\calO_v \subset K_v$ denotes the ring of integers when $v$ is non-archimedean.
Indeed, in the number field case, let $X_S=\Spec(\calO_S)$ 
where $\calO_S\subset K$ denotes the ring of elements that are integral outside $S$,
and in the  function field case, if $K$ is the function field of a smooth projective curve $X$, let $X_S = X \smallsetminus S$.
Then for a sufficiently large  finite set $S$ of places of $K$, the cocycle $\alpha$ extends to an element in $\check{Z}^2(X_S, \Gm)$, 
hence (by abuse of notation) $\alpha_v \in \check{Z}^2(\calO_v, \Gm)$ for all $v \notin S$.
Because ${\rm{H}}^2(\calO_v, \Gm) = 0$ \cite[Ch.\,IV, Cor.\,2.13]{milneetale}, and again using the general fact that $\check{H}^2 \hookrightarrow {\rm{H}}^2$, we deduce that $\alpha = d\beta_v$ for some \v{C}ech cochain $\beta_v \in \check{C}^1(\calO_v, \Gm)$. This proves the claim.

We now obtain from this data an element of $\Q/\Z$ as follows. By \cite[Lem.\,5.11.1]{Rosengarten}, there exists $h \in \check{C}^2(K, \Gm)$ such that $\alpha \cup \alpha' = dh$. For each place $v$, one readily computes that $(\beta_v \cup \alpha'_v) - h \in \check{C}^2(K, \Gm)$ is a $2$-cocycle, hence -- via the natural map $\check{H}^2 \rightarrow {\rm{H}}^2$ -- we may take its invariant. Then we add these invariants up over all places of $v$ to get an element of $\Q/\Z$, which we shall denote $\langle \alpha, \eta', (\beta_v)_v\rangle_K \in \Q/\Z$, defined for any triple consisting of a \v{C}ech cocycle $\alpha \in \check{Z}^2(K, G)$, a cohomology class $\eta' \in {\rm{H}}^1(K, \widehat{G})$, and for all $v$ \v{C}ech cochains $\beta_v \in \check{C}^1(K_v, G)$ such that $d\beta_v = \alpha_v$ and such that $\beta_v$ extends to an element of $\check{C}^1(\calO_v, G)$ for all but finitely many $v$. When $\eta' \in \Sha^1(K, \widehat{G})$, this is none other than the $\Sha$ pairing (\ref{shapairing}), as follows from the definition of this pairing in \cite[\S5.11]{Rosengarten}.

It is crucial here to note that -- unlike with the $\Sha$ pairing as defined in \cite{Rosengarten} -- {\em this pairing depends on the choice of $\{\beta_v\}_v$} because we have not assumed that $\eta' \in \Sha^1$. Nevertheless, we claim that this pairing is well-defined -- that is, the sum defining it involves only finitely many nonzero terms and is independent of our choices of $h$ and $\alpha'$. The independence of $h$ follows from the fact that the sum of the invariants of a global Brauer class vanishes. To verify independence of the choice of $\alpha'$, let $\alpha' + d\gamma$ be another cocycle representing $\eta'$ with $\gamma \in \check{C}^0(K, \widehat{G})$. Then
\[
\alpha\cup(\alpha'+d\gamma) = \alpha\cup\alpha' + d(\alpha\cup\gamma) = d(h + (\alpha\cup\gamma)).
\]
Thus the new local Brauer class at $v$ obtained by the replacement $\alpha' \mapsto \alpha'+d\gamma$ differs from the old one by
\[
(\beta_v\cup d\gamma) - (\alpha\cup\gamma) = [(d\beta_v\cup\gamma) - d(\beta_v\cup \gamma)] - (\alpha\cup \gamma) = d(-\beta_v\cup\gamma),
\]
hence has the same invariant. This proves independence of the choice of cocycle $\alpha'$ representing $\eta'$.

It only remains to verify that all but finitely many of the local invariants involved in the pairing vanish. In order to show this, extend $\alpha'$ to a class in ${\check{Z}}^1(\calO_S, \widehat{G})$ and $h$ to a class in ${\check{C}}^2(\calO_S, \Gm)$ for some finite set $S$ of places of $K$ containing all of the archimedean places. For a nonarchimedean place $v$ of $K$, let $\calO_v \subset K_v$ denote the ring of integers. Enlarging $S$, we also have that $\beta_v$ extends to an element of $\check{C}^1(\calO_v, G)$ for all $v \notin S$. Thus, for $v \notin S$, the local Brauer class represented by $(\beta_v\cup \alpha')-h$ extends to an element of ${\rm{H}}^2(\calO_v, \Gm) = 0$ \cite[Ch.\,IV, Cor.\,2.13]{milneetale}. This completes the proof that $\langle \alpha, \eta', (\beta_v)_v\rangle_K$ is well-defined.

We will require the following lemma.

\begin{lemma}
\label{Brlemma}
Let $K$ be a global field. Then for every integer $n > 0$, the map $\Br(k)[n] \rightarrow \Br(k)/n\cdot\Br(k)$ is surjective. When $n$ is odd or $K$ has no real places, $\Br(k)/n\cdot\Br(k) = 0$.
\end{lemma}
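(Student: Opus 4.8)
The plan is to reduce everything to the fundamental exact sequence of global class field theory
\[
0 \longrightarrow \Br(K) \xrightarrow{\ \mathrm{loc}\ } \bigoplus_v \Br(K_v) \xrightarrow{\ \sum_v \inv_v\ } \Q/\Z \longrightarrow 0
\]
together with the standard local computations $\inv_v\colon \Br(K_v) \xrightarrow{\ \sim\ } \Q/\Z$ for $v$ non-archimedean (so that $\Br(K_v)$ is divisible), $\Br(K_v) \simeq \tfrac{1}{2}\Z/\Z$ for $v$ real, and $\Br(K_v) = 0$ for $v$ complex (for a function field there are simply no archimedean places).

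First I would apply the snake lemma to multiplication by $n$ on this short exact sequence. Multiplication by $n$ is surjective on $\Q/\Z$, and the map $\bigoplus_v \Br(K_v)[n] \to (\Q/\Z)[n]$ is already surjective, since a single non-archimedean place realizes all of $(\Q/\Z)[n] = \tfrac{1}{n}\Z/\Z$ via $\inv_v$. Hence the connecting homomorphism $(\Q/\Z)[n] \to \Br(K)/n\Br(K)$ vanishes, and since $(\Q/\Z)/n(\Q/\Z) = 0$ the snake sequence collapses to an isomorphism
\[
\Br(K)/n\Br(K) \;\xrightarrow{\ \sim\ }\; \bigoplus_v \Br(K_v)/n\Br(K_v) \;=\; \bigoplus_{v\ \mathrm{real}} \Br(K_v)/n\Br(K_v),
\]
the last equality because $\Br(K_v)$ is $n$-divisible for $v$ non-archimedean and zero for $v$ complex. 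If $n$ is odd then $n\cdot(\tfrac{1}{2}\Z/\Z) = \tfrac{1}{2}\Z/\Z$, killing every summand on the right, and if $K$ has no real places there are no summands at all; in both cases $\Br(K)/n\Br(K) = 0$, which is the second assertion.

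For the surjectivity claim I may assume $n$ is even and $K$ has at least one real place, as otherwise the target vanishes. Under the displayed isomorphism a class in $\Br(K)/n\Br(K)$ is a finitely supported tuple $(\epsilon_v)$ with $\epsilon_v \in \tfrac{1}{2}\Z/\Z$ indexed by the real places. I would lift it to an $n$-torsion Brauer class by prescribing local invariants: $\inv_v = \epsilon_v$ at each real place, $\inv_v = 0$ at all non-archimedean places save possibly one, and, if $\sum_{v\ \mathrm{real}} \epsilon_v = \tfrac{1}{2}$ in $\Q/\Z$, set $\inv_{v_0} = \tfrac{1}{2}$ at one auxiliary non-archimedean place $v_0$ (legitimate since $2 \mid n$, so $\tfrac{1}{2} \in \tfrac{1}{n}\Z/\Z$). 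The local invariants then sum to $0$, so the fundamental exact sequence produces a class $x \in \Br(K)$ with these localizations; since each local component is killed by $2 \mid n$ and $\mathrm{loc}$ is injective, $x \in \Br(K)[n]$. Its image in $\Br(K)/n\Br(K) \simeq \bigoplus_{v\ \mathrm{real}}\Br(K_v)/n\Br(K_v)$ is exactly $(\epsilon_v)$, the auxiliary component dying because $\Br(K_{v_0})/n\Br(K_{v_0}) = 0$. The only point needing care is precisely this last bit of bookkeeping: an honest global Brauer class must satisfy $\sum_v \inv_v = 0$, so when the prescribed real invariants sum to $\tfrac{1}{2}$ one is forced into the small correction at an auxiliary finite place — harmless because a finite place contributes nothing to $\Br(K)/n\Br(K)$ — but beyond this everything is a mechanical consequence of the fundamental exact sequence and the local Brauer-group computations, so I anticipate no substantive obstacle.
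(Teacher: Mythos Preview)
Your proof is correct and rests on essentially the same class-field-theoretic input as the paper's. The paper is slightly more direct: it simply records the splitting $\Br(K) \simeq \bigl(\bigoplus_{v\ \mathrm{real}} \tfrac{1}{2}\Z/\Z\bigr) \oplus \bigl(\bigoplus_{v \neq v_0\ \mathrm{nonarch}} \Q/\Z\bigr)$ obtained by omitting one nonarchimedean place from the fundamental exact sequence and observes that both assertions follow at once, whereas you extract the same conclusions via the snake lemma and an explicit lifting.
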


\begin{proof}
The description of $\Br(k)$ provided by class field theory shows, for any nonarchimedean place $v_0$ of $K$, we have an isomorphism $$\Br(k) \simeq \left(\bigoplus_{v \mbox{ real}} (1/2)\Z/\Z\right) \oplus \left(\bigoplus_{v \neq v_0 \mbox{ nonarch}} \Q/\Z\right),$$ where the first sum is over the real places of $K$ and the second over the nonarchimedean places distinct from $v_0$. The lemma follows from this.
\end{proof}

With these preliminaries in hand, we are now prepared to carry out the second stage in the proof of Theorems \ref{charpinfinitemult} and \ref{charpinfinitequantitativemult}.

\begin{proposition}
\label{killingsha}
Let $\ell$ be a prime, $L/K$ an algebraic extension of a global function field such that either (1) $L/K$ is a locally infinite Galois $\ell$-primary extension, or (2) $\ell = p = {\rm{char}}(K)$ and $L$ is the maximal abelian $p$-primary extension of $K$. Let $M$ be a finite $\ell$-primary multiplicative $K$-group scheme which splits over a Galois extension of $K$ of degree $m$, and let $\xi \in \Sha^2(K, M)[\ell]$. Then there is some $K$-finite subextension $L/H/K$ such that $\xi$ vanishes over $H$. In case (2), we may choose $H$ so that ${\rm{log}}_p([H: K]) \leq 1+cm{\rm{log}}(m)^3$ for some universal effectively computable $c > 0$.
\end{proposition}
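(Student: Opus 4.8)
The plan is to use the "$\Sha$-versus-$\mathrm{H}^1$" cocycle-level pairing $\langle \alpha, \eta', (\beta_v)_v\rangle$ developed above together with the exhaustion lemma, Lemma~\ref{exhaust Sha}, applied to $E := \widehat{M}$. First I would invoke Lemma~\ref{exhaust Sha}(i) to produce a subextension $L/F/K$ with $[F:K] \le m!$ and a finite subgroup $A \subset \mathrm{H}^1(F, \widehat{M})$ such that, for every $F$-finite $L/F'/F$, the image of $A$ in $\mathrm{H}^1(F', \widehat{M})$ contains $\Sha^1(F', \widehat{M})$, with every $a \in A$ landing in $\Sha^1$ over some intermediate $L$-finite extension. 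Replacing $K$ by $F$ (in case (2) absorbing the cost $\log_p[F:K] \le c_1 m\log(m)$ into the final bound), we reduce to: $\xi \in \Sha^2(K, M)[\ell]$, and a finite set of classes $\alpha_1, \dots, \alpha_r \in \mathrm{H}^1(K, \widehat{M})$ generating $A$, each dying at all places outside some finite set. The goal becomes to kill $\xi$ by a subextension $L/H/K$ over which $\xi$ pairs trivially with every $\alpha_i$; since over $H$ the $\alpha_i$ still surject onto $\Sha^1(H, \widehat{M})$ (by Lemma~\ref{exhaust Sha}(i) applied with $F' = H$), and the $\Sha$-pairing \eqref{shapairing} over $H$ is perfect, $\xi_H$ must then vanish.

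To make $\xi$ pair trivially with each $\alpha_i$: represent $\xi$ by a \v{C}ech cocycle $\alpha \in \check{Z}^2(K, M)$ and each $\alpha_i$ by $\alpha'_i \in \check{Z}^1(K, \widehat{M})$, and fix cochains $\beta_v$ trivializing $\alpha_v$ locally (integrally for all but finitely many $v$). Let $S$ be the finite set of "bad" places: those where $\xi_v \ne 0$, those where some $\alpha_i$ is nonzero, and those excluded places for the integral models and the $\beta_v$. At each $v \in S$, I would first use Lemma~\ref{exhaustingpts} to find a finite subextension $L_w/F^v/K_v$ with $[F^v:K_v] \le m!$ on which $\widehat{M}(L_w) = \widehat{M}(F^v)$, and then (as in the proof of Proposition~\ref{canlandinsha}) pass to a further $L_w/H^v/F^v$ of degree divisible by $\ell$; over $H^v$, local duality forces $\xi_v$ and hence the local Brauer contribution $(\beta_v \cup \alpha'_{i,v}) - h_i$ to become trivial after a bounded-degree extension. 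For places outside $S$, the local invariant is already zero. Thus realizing all the $H^v/K_v$ by a global $L/H/K$ via Theorem~\ref{realizinglocexts} — with $\log_p[H:K]$ bounded in terms of $m!$ and hence by $\le c_2 m\log(m)^2$ — makes $\langle \alpha_H, (\alpha_i)_H, (\beta_v)_v \rangle_H = 0$ for all $i$. Here one uses that the cocycle-level pairing, while choice-dependent, is unchanged if we only enlarge the field and keep the same $\beta_v$; and since the $\alpha_i$ generate all of $\Sha^1(H, \widehat{M})$ up to the image of $A$, and elements of $A$ that lie in $\Sha^1(H,\widehat{M})$ pair with $\xi_H$ exactly via this cocycle pairing (independent of $\beta_v$ once we're in $\Sha$), we get $\xi_H \perp \Sha^1(H, \widehat{M})$, whence $\xi_H = 0$.

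The main obstacle — and the subtle point flagged in the introduction — is that the $\alpha_i$ need not lie in $\Sha^1(K, \widehat{M})$; they can be nonzero at all places of $S$, so the clean $\Sha \times \Sha \to \Q/\Z$ pairing is not directly available over $K$. This is exactly what the cocycle-refined pairing $\langle \alpha, \eta', (\beta_v)_v\rangle$ is designed to circumvent: it is defined for $\eta' \in \mathrm{H}^1$ (not just $\Sha^1$), it agrees with the honest $\Sha$-pairing once $\eta'$ does land in $\Sha$, and crucially it only requires killing the *finitely many* local Brauer classes indexed by $v \in S$ — not infinitely many local conditions. I would need to check carefully that: (a) the bad set $S$ is genuinely finite (using \cite[Th.\,2.18]{cesnavicius} for $\xi$, Cebotarev/integral-model arguments for the $\alpha_i$, and the integrality of almost all $\beta_v$); (b) enlarging the ground field from $K$ to $H$ transports the chosen cocycles and trivializing data compatibly, so the local invariants over $H$ are the degree-scaled images of those over $K$ (here I would use Lemma~\ref{Brlemma} to guarantee that a local Brauer class of order dividing $\ell$ can be killed in an $\ell$-primary extension — this is why $\ell = p$ and no real places is harmless); and (c) once all $r$ local conditions are arranged simultaneously by one global $H$, the degree bookkeeping gives $\log_p[H:K] \le 1 + c\,m\log(m)^3$ after composing the $m!$-type local bounds through Theorem~\ref{realizinglocexts} twice (once to realize the $F^v$, once to realize the ramified $H^v$), in parallel with the estimates \eqref{canlandinshapfeqn1}–\eqref{canlandinshapfeqn2} of Proposition~\ref{canlandinsha}.
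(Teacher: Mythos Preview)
Your overall architecture matches the paper's: use Lemma~\ref{exhaust Sha}(i) to produce $F$ and the finite set $A\subset\mathrm{H}^1(F,\widehat{M})$, work with the cocycle-level pairing $\langle\alpha,\eta',(\beta_v)_v\rangle$, kill the finitely many nonzero local contributions, and conclude by perfection of the $\Sha$-pairing. There is, however, a genuine gap at the local-killing step. First a minor correction: your $S$ cannot include ``those places where some $\alpha_i$ is nonzero,'' since (as you yourself acknowledge) the $\alpha_i$ may be nonzero at infinitely many places; the correct finite set is $\{v:\inv_v((\beta_v\cup\alpha'_a)-h_a)\neq 0\ \text{for some}\ a\}$, which is finite by the integral-model argument alone. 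More seriously, you propose to kill the local contributions by re-running Proposition~\ref{canlandinsha}: exhaust $\widehat{M}(L_w)$ via Lemma~\ref{exhaustingpts} and pass to degree divisible by $\ell$, so that ``local duality forces $\xi_v$ and hence the local Brauer contribution to become trivial.'' But $\xi_v=0$ already (we are in $\Sha^2$), and this in no way forces $\inv_v((\beta_v\cup\alpha'_a)-h_a)=0$: that quantity depends on the choice of $\beta_v$ and is \emph{not a priori $\ell$-torsion}, so scaling by an $\ell$-power degree need not kill it. Your invocation of Lemma~\ref{Brlemma} here is also off target---that lemma concerns the \emph{global} Brauer group and says nothing about local invariants.

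The missing mechanism has two parts. First, pass to $F'/F$ in which every $\alpha'_a$ becomes \emph{locally trivial} at all places above $S$; this is precisely Lemma~\ref{exhaust Sha}(ii) (using that each $a\in A$ eventually lands in $\Sha^1$ over some subextension of $L$), and is where Lemma~\ref{exhaustingpts} actually enters---applied inside that lemma to $\mathrm{R}_{H/K}(E_H)$, not to $\widehat{M}$ directly. Second, a cocycle identity: writing $\ell\alpha=d\tau$, set $\gamma_v:=\ell\beta_v-\tau$ (a local $1$-cocycle) and $\zeta_a:=\tau\cup\alpha'_a-\ell h_a$ (a global $2$-cocycle); one computes that $\ell[(\beta_v\cup\alpha'_a)-h_a]$ equals $(\gamma_v\cup\alpha'_v)+\zeta_a$ in cohomology. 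Lemma~\ref{Brlemma} is used here---globally---to adjust $h_a$ so that $[\zeta_a]=0$; and once $\alpha'_v=d\pi_v$ over $F'_v$, the term $\gamma_v\cup\alpha'_v$ is a coboundary. This yields $\ell\cdot\inv_v((\beta_v\cup\alpha'_a)-h_a)=0$ at the places of $F'$ over $S$, and only \emph{then} does the final passage to $F''$ with $\ell\mid[F''_w:F'_v]$ annihilate the pairing. Without these two steps your bounded-degree extension has no reason to succeed.
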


\begin{proof}[Proof]
As before, let $E := \widehat{M}$. By Lemma \ref{exhaust Sha}(i), there is a subextension $L/F/K$ of degree $\leq m!$ over $K$ and a finite subgroup $A \subset {\rm{H}}^1(F, E)$ such that, for all $F$-finite extensions $L/H/F$, $\Sha^1(H, E)$ is contained in the image of $A$ under the map
\[
{\rm{H}}^1(F, E) \rightarrow {\rm{H}}^1(H, E),
\]
and furthermore, every element of $A$ does indeed map to an element of $\Sha^1(H, E)$ for some finite intermediate $L/H/F$.

As in the definition of the pairing $\langle\cdot, \cdot, \cdot\rangle$, choose a cocycle $\alpha \in \check{Z}^2(F, M)$ representing $\xi$ and for each place $v$ of $F$ a cochain $\beta_v \in \check{C}^1(F_v, M)$ such that
\begin{equation}
\label{coceqn3}
d\beta_v = \alpha_v
\end{equation}
and such that $\beta_v$ extends to an element of $\check{C}^1(\calO_v, M)$ for all but finitely many $v$. Also choose for each of the finitely many $a \in A$ a cocycle $\alpha'_a \in \check{Z}^1(F, E)$ representing $a$ and a cochain $h_a \in \check{C}^2(F, \Gm)$ such that
\begin{equation}
\label{coceqn1}
\alpha\cup\alpha'_a = dh_a.
\end{equation}
Because $\alpha$ represents the $\ell$-torsion cohomology class $\xi$, there is $\tau \in \check{C}^1(K, M)$ such that
\begin{equation}
\label{coceqn2}
\ell \alpha = d\tau.
\end{equation}
For $a \in A$, let
\begin{equation}
\label{coceqn5}
\zeta_a := \tau\cup\alpha' - \ell h_a.
\end{equation}
Then
\begin{align*}
d\zeta_a & = d\tau\cup\alpha' -\tau\cup d\alpha' - \ell(dh_a) \\
& = \ell\alpha\cup\alpha' - \ell(\alpha\cup\alpha') = 0 \hspace{.2 in} \mbox{by (\ref{coceqn2}) and (\ref{coceqn1}),}
\end{align*}
so $\zeta_a$ represents a Brauer class. On the other hand, we are free to modify $h_a$ by a cocycle and preserve its defining relation (\ref{coceqn1}). This has the effect of modifying the cohomology class of $\zeta_a$ by the (negative of the) corresponding element of $\ell(\Br(k))$. Thanks to Lemma \ref{Brlemma}, therefore, we may assume that the Brauer class of $\zeta_a$ is trivial. Finally, define
\begin{equation}
\label{coceqn7}
\gamma_v := \ell \beta_v - \tau.
\end{equation}
Then we claim that
\begin{equation}
\label{coceqn4}
d\gamma_v = 0.
\end{equation}
Indeed, $d\gamma_v  = \ell(d\beta_v) - d\tau = 0$, the last equality due to (\ref{coceqn3}) and (\ref{coceqn2}).

For a place $v$ of a global field $H$, let ${\rm{inv}}_v\colon \Br(H_v) \rightarrow \Q/\Z$ denote the invariant map. Let $S$ be the (finite!) set of places of $F$ at which ${\rm{inv}}_v((\beta_v\cup\alpha'_a)-h_a) \neq 0$ for some $a \in A$. Then one may find an $F$-finite extension $L/F'/F$ such that every $a \in A$ becomes trivial at every place of $F'$ above $S$. In case (1), one simply passes to a Galois extension $F'/F$ such that this happens for some place above every $v \in S$, and then Galoisness ensures that it happens at every place above such $v$. In case (2), Lemma \ref{exhaust Sha}(ii) says that we can find such $F'$ with
\begin{equation}
\label{degbound2}
{\rm{log}}_p([F': K]) \leq cm{\rm{log}}(m)^2.
\end{equation}
Because the Brauer classes $(\beta_v\cup\alpha'_a) - h_a$ vanish for $v \notin S$ and all $a \in A$, it follows that, for any finite extension $F''/F'$ and any $a \in A$,
\begin{equation}
\label{coceqn9}
\langle\alpha, a, (\beta_v)_v\rangle_{F''} = \sum_{w\mid S} {\rm{inv}}_w((\beta_v\cup \alpha'_a) - h_a),
\end{equation}
where the sum is over the places of $F''$ above $S$.

Now choose $F'$-finite $L/F''/F'$ such that, for each place $v$ of $F'$ above $S$, and each place $w$ of $F''$ above $v$,
\begin{equation}
\label{coceqn10}
\ell \mid [F''_w: F'_v].
\end{equation}
In case (1), such $F''$ exists because $L/K$ is a locally infinite $\ell$-primary extension (where again we use Galoisness to ensure that the required divisibility happens at every place above a given place $v$ once it happens at a single place above $v$). In case (2), Theorem \ref{realizinglocexts} and (\ref{degbound2}) provide such an $F''$ with
\[
{\rm{log}}_p([F'': K]) \leq {\rm{log}}_p([F': K]) + (1+cm{\rm{log}}(m)^2)(1+{\rm{log}}[1+cm{\rm{log}}(m)^2]) \leq 1 + c'm{\rm{log}}(m)^3
\]
for some universal effectively computable $c' > 0$.

We claim that
\begin{equation}
\label{Shavscoarsepairing}
\langle \alpha, a, (\beta_v)_v\rangle_{F''} = 0
\end{equation}
for all $a \in A$. Assuming this, we claim that it will then follow that $\xi_{F''} = 0$, which will prove the proposition (upon taking $H := F''$). Indeed, all elements of $\Sha^1(F'', E)$ are of the form $a_{F''}$ for $a \in A$, and if $a_{F''}$ does in fact lie in $\Sha$, then the left side of (\ref{Shavscoarsepairing}) equals the $\Sha$ pairing between $\xi$ and $a_{F''}$ over $F''$. It then will follow that $\xi_{F''} \in \Sha^2(F'', M)$ annihilates $\Sha^1(F'', E)$ under the $\Sha$ pairing, and is therefore $0$ by \cite[Th.\,1.2.10]{Rosengarten}.

It remains to prove (\ref{Shavscoarsepairing}). We first claim that, for each place $v$ of $F'$ above $S$, we have
\begin{equation}
\label{coceqn8}
\ell\cdot{\rm{inv}}_v((\beta_v \cup \alpha')-h_a) = 0.
\end{equation}
Indeed, by our choice of $F'$, $\alpha'_v$ is trivial in cohomology for $v\mid S$, so
\begin{equation}
\label{coceqn6}
\alpha'_v = d\pi_v
\end{equation}
for some $\pi_v \in \check{C}^0(F'_v, E)$. We compute:
\begin{align*}
\ell[(\beta_v\cup\alpha')-h_a] & = [(\ell \beta_v)\cup\alpha']-\ell h_a \\
&= [(\gamma_v + \tau)\cup\alpha'] + [\zeta_a - (\tau\cup\alpha')] \hspace{.2 in} \mbox{by (\ref{coceqn7}) and (\ref{coceqn5})} \\
&= (\gamma_v\cup d\pi_v) + \zeta_a \hspace{.2 in} \mbox{by (\ref{coceqn6})}\\
&= d(-\gamma_v\cup \pi_v) + \zeta_a \hspace{.2 in} \mbox{by (\ref{coceqn4})}.
\end{align*}
This proves (\ref{coceqn8}), since $\zeta_a$ is trivial as a Brauer class.

Now we may prove (\ref{Shavscoarsepairing}). Thanks to (\ref{coceqn9}), it is enough to show that ${\rm{inv}}_w((\beta_v\cup\alpha')-h_a) = 0$ for each place $w \mid S$ of $F''$. But, if $v$ is the place of $F'$ below $w$, then
\[
{\rm{inv}}_w((\beta_v\cup\alpha')-h_a) = [F''_w: F'_v]\cdot{\rm{inv}}_v((\beta_v\cup\alpha')-h_a),
\]
and this vanishes by (\ref{coceqn8}) and (\ref{coceqn10}). This completes the proof of the proposition.
\end{proof}

We can now easily complete the proofs of Theorems \ref{charpinfinitemult} and \ref{charpinfinitequantitativemult}.

\begin{proof}[Proof of Theorems \ref{charpinfinitemult} and \ref{charpinfinitequantitativemult}]
By an easy induction, we may assume in both cases that $n = 1$. By Proposition \ref{canlandinsha}, there is a finite extension $H/K$ such that $\xi_H \in \Sha^2(H, M)$ with $H \subset L$ in the setting of Theorem \ref{charpinfinitemult}, and $H/K$ an abelian $p$-primary extension satisfying
\[
{\rm{log}}_p([H: K]) \leq 1 + cm{\rm{log}}(m)^3
\]
in the setting of Theorem \ref{charpinfinitequantitativemult}. By Proposition \ref{killingsha}, there is a finite $F/H$ such that $\xi_F = 0$, where $F \subset L$ in the setting of Theorem \ref{charpinfinitemult}, and $F/H$ is an abelian $p$-primary extension satisfying
\[
{\rm{log}}_p([F: H]) \leq 1 + cm{\rm{log}}(m)^3
\]
in the setting of Theorem \ref{charpinfinitequantitativemult}. This completes the proof of the theorems, modulo the minor technical issue in the case of Theorem \ref{charpinfinitequantitativemult} that, when $m = 1$, we obtain the bound ${\rm{log}}_p([F: K]) \leq 2$ rather than the desired bound of $1$. To remedy this, we note that when $m = 1$, $M$ is split, hence $\Sha^2(H, M) = 0$, so there is no need for the second step above in which we kill $\xi_H$ by passing to $F$.
\end{proof}

\begin{remark}
We note that there is another quantitative conclusion that may be obtained from the above proof of Theorem \ref{charpinfinitequantitativemult}, and which therefore also propagates to Theorem \ref{charpquantitative}: The solvable $p$-primary extension $F$ splitting $\xi \in \mathrm{H}^i(K, M)[p^n]$ can also be chosen to lie in a tower $F = F_{2n}/F_{2n-1}/\dots/F_1/F_0 = K$ such that each $F_{i+1}/F_i$ is abelian.
\end{remark}

We may now finally prove Theorems \ref{charpinfinite} and \ref{charpquantitative}.

\begin{proof}[Proof of Theorems \ref{charpinfinite} and \ref{charpquantitative}]
Thanks to Remark \ref{vanishesbeyonddeg2}, we only need to treat $i = 1, 2$. To streamline notation, let $\ell = p$ in the case of Theorem \ref{charpquantitative}. First suppose that $i = 2$, so $\xi \in {\rm{H}}^2(K, T)[\ell^n]$. Then $\xi$ is the image of an element $\eta \in {\rm{H}}^2(K, T[\ell^n])$. Because $T$ splits over a finite Galois extension of degree $m$, the same holds for $T[\ell^n]$. The desired result therefore follows from Theorems \ref{charpinfinitemult} and \ref{charpinfinitequantitativemult}.

Now we treat the case $i = 1$. Let $F/K$ be a finite Galois extension of degree $m$ splitting $T$. By \cite[Th.\,1.5.1]{ono}, we have an exact sequence
\[
0 \longrightarrow M \longrightarrow {\rm{R}}_{A/K}(\Gm) \longrightarrow T^n \times {\rm{R}}_{B/K}(\Gm) \longrightarrow 0
\]
for some $n > 0$, where ${\rm{R}}$ denotes Weil restriction, $M$ is a finite $K$-group scheme, and $A$ and $B$ are finite products of cyclic subextensions of $F$. Because ${\rm{R}}_{A/K}(\Gm)$ splits over $F$, so does the finite multiplicative group $M$. Consider $\xi$ as an element of ${\rm{H}}^1(T^n \times {\rm{R}}_{B/K}(\Gm))$ via one of the $T$ factors. The desired result now follows from Theorems \ref{charpinfinitemult} and \ref{charpinfinitequantitativemult}, together with the fact that ${\rm{H}}^1(K, {\rm{R}}_{A/K}(\Gm)) \simeq {\rm{H}}^1(A, \Gm) = 0$.
\end{proof}

\noindent \address
\vspace{.3 in}

\noindent \email

\end{document}